\documentclass[11pt,a4paper]{article}
\usepackage[utf8]{inputenc}
\usepackage[T1]{fontenc}
\usepackage{amsmath,amssymb,amsthm,mathtools}
\usepackage{bm}
\usepackage{booktabs}
\usepackage{array}
\usepackage{graphicx}
\usepackage{enumerate}
\usepackage[margin=1in]{geometry}
\usepackage{xcolor}
\usepackage{hyperref}
\usepackage{microtype}
\usepackage{float}
\usepackage{comment}
\newfloat{algobox}{t}{loa}
\floatname{algobox}{Algorithm}

\hypersetup{colorlinks=true,linkcolor=blue!55!black,citecolor=green!45!black,urlcolor=blue!55!black}
\newcommand{\Cref}{\autoref}
\newcommand{\cref}{\autoref}
\AtBeginDocument{%
}

\theoremstyle{plain}
\newtheorem{theorem}{Theorem}[section]

\newtheorem{proposition}{Proposition}[section]
\newtheorem{corollary}{Corollary}[section]
\theoremstyle{definition}
\newtheorem{assumption}{Assumption}
\newtheorem{remark}{Remark}[section]

\newcommand{\Om}{\Omega}
\newcommand{\dx}{\,\mathrm{d}x}
\newcommand{\dt}{\Delta t}
\newcommand{\eps}{\varepsilon}
\newcommand{\epsb}{\eps_{b}}
\newcommand{\epsq}{\eps_{q}}
\newcommand{\grad}{\nabla}
\newcommand{\dv}{\grad\!\cdot\!}
\newcommand{\lap}{\Delta}
\newcommand{\pt}{\partial_t}
\newcommand{\inner}[2]{\bigl(#1,\,#2\bigr)}

\newcommand{\norm}[1]{\lVert #1 \rVert}

\newcommand{\R}{\mathbb{R}}
\newcommand{\str}{\star}
\newcommand{\bs}{b_\str}
\newcommand{\cs}{c_\str}
\newcommand{\As}{A_\str}
\newcommand{\ks}{\kappa_\str}
\newcommand{\Gs}{G_\str}
\newcommand{\Dtwo}{D_{2}}
\newcommand{\dd}{D^{2}}
\newcommand{\Ef}{E_{f}}
\newcommand{\Elin}{E_{\mathrm{lin}}}
\newcommand{\Emod}{\mathcal{E}}
\newcommand{\Dmod}{\widetilde{\mathcal{D}}}
\newcommand{\Dcont}{\mathcal{D}}
\newcommand{\Ered}{\widetilde{E}}
\newcommand{\Phimix}{\Phi}

\newcommand{\half}{\tfrac12}
\newcommand{\CO}{C_{0}}
\DeclareMathOperator{\clamp}{clamp}

\newcommand{\eqdef}{\vcentcolon=}
\usepackage{xspace}   
\newcommand{\eqS}{(S1)--(S4)\xspace}

\title{\bf A linear, second-order, unconditionally energy-stable\\
scalar-auxiliary-variable scheme for the\\
CHQ viscoelastic phase-separation model}
\author{
Ritesh Bhola\thanks{Institute of Physics, Johannes Gutenberg University
Mainz, Germany. \texttt{rbhola@uni-mainz.de}}
\and
Michael te Vrugt\thanks{Institute of Physics, Johannes Gutenberg University
Mainz, Germany. \texttt{tevrugtm@uni-mainz.de}}
}
\date{}
\begin{document}
\maketitle
\begin{abstract}
The CHQ model describes viscoelastic phase separation in a
polymer--solvent mixture: a Cahn--Hilliard equation for the polymer volume
fraction $\phi$ is two-way coupled, through a phase-dependent
cross-diffusion flux, to a damped relaxation equation for a scalar bulk
elastic stress $q$. We propose a scalar-auxiliary-variable (SAV) time
discretisation of this system, combining the SAV reformulation of the
double-well potential with a second-order backward differentiation
formula (BDF2) and a second-order extrapolation of all
phase-dependent mobilities: a linear, computationally cheaper complement
to the reference scheme's fully nonlinear Newton solve. The resulting method is (i)
linear at every time step, with a coupling structure we prove has
exactly one solution at every step, for every choice of time-step size;
(ii) mass conserving, exactly, as an algebraic identity of the
discrete equations themselves (the only drift ever seen numerically
traces to the finite tolerance of the iterative linear solver, not a
failure of the conservation property); and (iii) unconditionally stable for a modified
discrete energy, via an exact one-step energy--dissipation identity. We show the energy identity extends to this
filtered variant as well, with two explicit extra non-negative terms. Hence, the scheme actually producing the
physics results is itself provably
stable for the same energy. A Fourier--Galerkin implementation verifies that the energy identity holds at every time step to within floating-point round-off. 
It also confirms second-order temporal accuracy, including in the steep-interface regime, and agreement with an independently implemented nonlinear Newton--BDF2 solver. Compared with that independent nonlinear
Newton--BDF2 solver, at equal grid resolution, the SAV method uses
$2.5$--$3.0\times$ fewer linear-solver iterations and $1.7$--$2.0\times$
less wall-clock time.
\end{abstract}

\section{Introduction}

\subsection{The model}

Phase separation in polymer solutions and melts is frequently accompanied by
a pronounced viscoelastic response: the slower-relaxing polymer network
resists deformation on the timescale of demixing, and the resulting elastic
stress can dramatically alter, and in some cases arrest, the coarsening process
compared to a simple binary fluid. The same coupling between phase
separation and viscoelasticity is central to biomolecular condensates in
living cells \cite{Tanaka2022}. Condensates themselves age into
viscoelastic, Maxwell-fluid-like materials rather than remaining simple
liquids \cite{Jawerth2020}, and the surrounding chromatin network has been
shown experimentally to slow the coalescence of nucleolar droplets by orders
of magnitude relative to isolated liquid droplets, an arrest effect closely
analogous to the elastic suppression of coarsening studied here
\cite{Caragine2018}. More directly still, recent experiments show that
tuning the viscoelastic properties of a nucleus-mimicking compartment
controls the dynamics and liquid-to-solid transition of protein condensates
\cite{Fritzen2026}. This phenomenon, reviewed
by Tanaka \cite{Tanaka2000} and modelled by a hierarchy of
increasingly reduced continuum descriptions, couples a Cahn--Hilliard-type
phase-field equation to an evolution equation for the elastic stress through
a cross-diffusion flux. This structure is numerically delicate:
energy-stable schemes for the uncoupled Cahn--Hilliard equation do not
automatically extend to the coupled system, and the reference discretisation
for exactly this reduced model \cite{Brunk2025} requires a nonlinear solve
at every time step. This paper shows that a linear scheme can retain the
same unconditional energy stability.

The scalar auxiliary variable (SAV) approach
\cite{ShenXuYang2018,ShenXuYang2019} is a general strategy for exactly this
kind of question: introducing one extra scalar unknown that tracks the
square root of the (shifted) nonlinear part of the energy turns the
troublesome nonlinear term (the derivative $f'(\phi)$ of the double-well
potential in \eqref{eq:chq-mu} below) into a linear one, at the cost of
solving one additional, cheap scalar equation each step. Applied to a single
Cahn--Hilliard equation this already gives a linear, unconditionally
energy-stable scheme with no step-size restriction and no extra
stabilisation term added by hand, the usual efficiency case made for SAV
schemes. It is, however, nontrivial to extend this to the viscoelastic phase separation model given by \cref{eq:chq} 
below, where the SAV-linearised equation for the density $\phi$ and the equation for the stress $q$ are coupled through
the same cross-diffusion flux (\Cref{sec:positioning}).

Following Spiller et al.\ \cite{Spiller2021}, Zhou et al.\ \cite{ZhouZhangE2006}, and 
Brunk et al.\ \cite{Brunk2025}, we consider a model without hydrodynamics that has the polymer
volume fraction $\phi$, the chemical potential $\mu$, and a single scalar bulk
elastic variable $q$ as dynamical variables. We consider the following system on a periodic domain $\Om\subset\R^d$ for $t\in(0,T)$:
\begin{subequations}\label{eq:chq}
\begin{align}
\pt\phi &= \dv\!\bigl( b(\phi)\grad\mu \;-\; c(\phi)\grad(A(\phi)q) \bigr),
   \label{eq:chq-phi}\\[2pt]
\mu &= -\gamma\lap\phi + f'(\phi), \label{eq:chq-mu}\\[2pt]
\pt q &= -\kappa(\phi)\,q
   \;-\; A(\phi)\,\dv\!\bigl( c(\phi)\grad\mu - d_0\grad(A(\phi)q) \bigr)
   \;+\; \epsq\lap q, \label{eq:chq-q}
\end{align}
\end{subequations}

The analysis below is independent of the spatial dimension $d$, although our numerical experiments use $d=2$. We impose periodic boundary conditions to eliminate boundary terms in the integration-by-parts identities used in \Cref{sec:notation} and to match the Fourier spectral discretisation described in \Cref{sec:accuracy}. At the continuous level, other boundary conditions that eliminate the same boundary terms, such as suitable no-flux conditions, would also suffice. Here, $\gamma>0$ is the gradient-energy coefficient, $d_0,\epsq>0$ are constants, and $f$ is a double-well potential bounded from below. The functions $b,c,A,\kappa$ depend on the phase field $\phi$ and satisfy $b,\kappa>0$ and $c,A\ge0$.

\subsection{Energy structure}\label{sec:energy}

Equations \eqref{eq:chq} describe a system relaxing towards thermodynamic equilibrium:
$\phi$ and $q$ evolve so as to decrease a single scalar quantity $E$, with
the terms on the right of \eqref{eq:chq} acting as dissipative (frictional)
driving forces whose strength is set by the state-dependent coefficients
$b(\phi),c(\phi),A(\phi)$, together with, for $q$, additional linear
damping from the $-\kappa(\phi)q$ and $\epsq\lap q$ terms in
\eqref{eq:chq-q}. We define the energy as
\begin{equation}\label{eq:energy}
E(\phi,q)=\int_\Om\Bigl[\tfrac{\gamma}{2}|\grad\phi|^2+f(\phi)+\tfrac12 q^2\Bigr]\dx ,
\end{equation}
a sum of three physically distinct contributions: the interfacial energy
$\tfrac\gamma2|\grad\phi|^2$, which penalises sharp changes in composition
and so favours smooth, spread-out interfaces; the bulk free energy
$f(\phi)$, a double-well potential favouring the two pure phases
$\phi\approx0,1$ and so driving phase separation; and the elastic energy
$\tfrac12q^2$ stored in the polymer network, penalising deviations of the
elastic variable $q$ away from its relaxed value $q=0$.

For sufficiently smooth solutions that are periodic in space, the energy satisfies a dissipation law, ($\frac{\mathrm d}{\mathrm dt}E=-\Dcont$), as shown in \Cref{sec:cont-energy}. The dissipation rate is non-negative only under a suitable condition on the model coefficients. We derive this condition, called the coercivity condition, and establish the continuous energy law in \Cref{sec:cont-energy}.

\subsection{Related work}\label{sec:related}

Brunk, Egger, Habrich and Luk\'acov\'a-Medvidov\'a \cite{Brunk2025}
(Correction \cite{Brunk2025corr}) give a rigorously analysed, fully nonlinear
finite-element / variational-in-time discretisation of \eqref{eq:chq} with an
exact energy identity and order-optimal error estimates via relative energy,
and name the SAV route as future work for cheaper solves. Strasser, Tierra,
D\"unweg and Luk\'acov\'a-Medvidov\'a \cite{Strasser2019,LSDT2018} give
linear, energy-stable schemes for the full hydrodynamic polymer--solvent model
by a secant linearisation (not SAV), without an error analysis. Structural
cousins include a Cahn--Hilliard/Allen--Cahn cross-kinetic coupling
\cite{BEH2024}, viscoelastic Cahn--Hilliard models discretised by
compactness-driven (not SAV, not linear) schemes
\cite{GarckeKovacsTrautwein2022,GarckeTrautwein2024}, and the nearest SAV
scheme for any mechanically-coupled phase-field model, a Caginalp/elastic-solid
system \cite{JinLamYe2024} structurally distinct from \eqref{eq:chq}. The SAV
approach itself \cite{ShenXuYang2018,ShenXuYang2019,ShenXu2018} and the
frozen-mobility-at-extrapolated-state device \cite{VarMobCH2017} are
established for scalar gradient flows; single-auxiliary-variable SAV has been
used for other coupled Cahn--Hilliard systems \cite{CoupledSAV2023,TumourSAV2024}.
To the best of our knowledge, no SAV, IEQ, or energy-quadratisation
discretisation has previously been proposed for \eqref{eq:chq}, or for any
viscoelastic Cahn--Hilliard model with this cross-diffusion structure.

\subsection{Contribution}\label{sec:positioning}

For standard Cahn--Hilliard-type gradient flows, the SAV device separates the
nonlinear free-energy term from the remaining linear operator by freezing it
at an extrapolated state, and the resulting scheme's energy stability then
follows term by term. In \eqref{eq:chq}, however, $\phi$ and $q$ are coupled
through the same phase-dependent cross-diffusion flux
$c(\phi)\grad(A(\phi)q)$, entering the two equations with opposite sign:
freezing $c$ and $A$ independently, or at the new-time solution, breaks the
exact coupled-dissipation structure that produces the dissipation term
\eqref{eq:cont-D} in the continuous energy law, and reintroduces a
$\dt$-dependent remainder that would have to be absorbed by a stabilisation
term or a step-size restriction. The observation underlying \Cref{thm:main} below
is that freezing all of $b,c,A,\kappa$ at one common extrapolated
state $\phi^\star$ preserves this cross-diffusion structure exactly, so the
SAV reformulation and the BDF2 energy identity combine with no such remainder
and no restriction on $\dt$.

The new contribution of this work is not the SAV device or the extrapolation 
device in isolation, but to show that they
together survive the \emph{coupled} cross-diffusion dissipation structure of
\eqref{eq:chq}. Concretely:
\begin{enumerate}[(i)]
\item an SAV--BDF2 discretisation of the full coupled system, linear at every
step, with a proof (\Cref{thm:main}) that the modified-energy identity
\eqref{eq:star} extends exactly to the coupled dissipation form with
the $A(\phi)$-weighted test (holding for any time step $\dt>0$, with no
stabilisation term and no $\|\phi^{n+1}-\phi^\star\|$ remainder), together
with an extension (\Cref{rmk:filter}) to the post-step spectral-filtering
variant actually used for the physics study;
\item a per-step solvability theorem (\Cref{thm:solv}): the linear system
\eqS is uniquely solvable for every $\dt>0$;
\item numerical verification: the identity to round-off, second-order temporal
accuracy at the physical $\gamma=10^{-3}$, spectral spatial accuracy on the
smooth test problems considered, agreement with an independent nonlinear
solver, and a clean cost comparison.

\end{enumerate}
The present work is complementary to that of Ref.\ \cite{Brunk2025}, not an
improvement on its analysis; we do not claim a rigorous error estimate.

\paragraph{Outline.} \Cref{sec:cont-energy} derives the continuous energy
law and its SAV reformulation. \Cref{sec:scheme} states the discrete
scheme. \Cref{sec:thm} proves the discrete energy identity, the paper's main
result. \Cref{sec:structure} establishes solvability and mass conservation.
\Cref{sec:accuracy} discusses accuracy and the
Fourier--Galerkin implementation. \Cref{sec:numerics} collects the
numerical evidence: the identity to round-off, convergence, agreement with
independent solvers, cost, and the coupled-vs-uncoupled demonstration.
Proofs are given as concise sketches in the main text; fuller derivations
are collected in \Cref{app:derivations}.

\section{The continuous energy law and its SAV reformulation}\label{sec:cont-energy}

We first establish the energy-decay property of the continuous model \eqref{eq:chq}, introduced in \Cref{sec:energy}. We show not only that the energy $E$ decreases over time, but also derive an explicit decay rate in terms of the model's coefficients. The numerical scheme in \Cref{sec:scheme} will be designed to reproduce this decay rate exactly at the discrete level.

\begin{proposition}\label{prop:cont-energy}
Let $(\phi,\mu,q)$ solve \eqref{eq:chq}. Then $\tfrac{\mathrm d}{\mathrm dt}E=-\Dcont$ with
\begin{equation}\label{eq:cont-D}
\Dcont=\int_\Om\Bigl[\bigl(b(\phi)-\tfrac{c(\phi)^2}{d_0}\bigr)|\grad\mu|^2
   +\tfrac1{d_0}\bigl|c(\phi)\grad\mu-d_0\grad(A(\phi)q)\bigr|^2
   +\kappa(\phi)q^2+\epsq|\grad q|^2\Bigr]\dx ,
\end{equation}
and $\Dcont\ge0$ whenever $b(\phi)\,d_0\ge c(\phi)^2$ pointwise, the
coercivity condition \eqref{eq:coerc-cont}.
\end{proposition}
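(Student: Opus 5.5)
We differentiate $E$ along a smooth periodic solution and test each equation against the natural multiplier: \eqref{eq:chq-phi} against $\mu$, and \eqref{eq:chq-q} against $q$. Periodicity removes every boundary term. First,
\[
\tfrac{\mathrm d}{\mathrm dt}E=\int_\Om\bigl[\gamma\grad\phi\cdot\grad\pt\phi+f'(\phi)\pt\phi+q\,\pt q\bigr]\dx
=\inner{\mu}{\pt\phi}+\inner{q}{\pt q},
\]
where we integrated the gradient term by parts and used \eqref{eq:chq-mu}. Substituting \eqref{eq:chq-phi} and integrating by parts once gives
\[
\inner{\mu}{\pt\phi}=-\int_\Om\bigl[b|\grad\mu|^2-c\,\grad\mu\cdot\grad(Aq)\bigr]\dx .
\]

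Second, for $\inner{q}{\pt q}$ the structural point is that the divergence term in \eqref{eq:chq-q} is premultiplied by $A(\phi)$. Testing with $q$ therefore produces the weight $Aq$, and integrating by parts moves the gradient onto it:
\[
-\int_\Om Aq\,\dv\bigl(c\grad\mu-d_0\grad(Aq)\bigr)\dx=\int_\Om\bigl[c\,\grad\mu\cdot\grad(Aq)-d_0|\grad(Aq)|^2\bigr]\dx .
\]
The remaining terms are $-\int\kappa q^2\dx$ and $-\epsq\int|\grad q|^2\dx$. Adding the two contributions yields
\[
\tfrac{\mathrm d}{\mathrm dt}E=-\int_\Om\bigl[b|\grad\mu|^2-2c\,\grad\mu\cdot\grad(Aq)+d_0|\grad(Aq)|^2+\kappa q^2+\epsq|\grad q|^2\bigr]\dx .
\]

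Third, we complete the square in the quadratic form in $(\grad\mu,\grad(Aq))$. Expanding
\[
\tfrac1{d_0}\bigl|c\grad\mu-d_0\grad(Aq)\bigr|^2=\tfrac{c^2}{d_0}|\grad\mu|^2-2c\,\grad\mu\cdot\grad(Aq)+d_0|\grad(Aq)|^2
\]
shows that the integrand above equals the integrand of \eqref{eq:cont-D}, once we write $b=(b-c^2/d_0)+c^2/d_0$. This proves $\tfrac{\mathrm d}{\mathrm dt}E=-\Dcont$.

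Finally, nonnegativity follows term by term. The second term of $\Dcont$ is a square, $\kappa>0$, and $\epsq>0$, so only the first term can be negative; it is nonnegative exactly when $b\,d_0\ge c^2$ pointwise, which is the coercivity condition.

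The only delicate step is the sign bookkeeping of the two cross terms $\pm c\,\grad\mu\cdot\grad(Aq)$, so that they combine into $-2c\,\grad\mu\cdot\grad(Aq)$ rather than cancelling. This works only because the stress equation carries the factor $A(\phi)$ outside the divergence. The rest is routine, assuming enough regularity to justify differentiating under the integral.
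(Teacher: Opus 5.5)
Your proposal is correct and follows essentially the same route as the paper's proof. The paper also reduces $\tfrac{\mathrm d}{\mathrm dt}E$ to $\inner{\mu}{\pt\phi}+\inner{q}{\pt q}$, integrates the stress equation by parts with the regrouped test function $Aq$ so that the cross term appears twice, and then applies the completed-square identity \eqref{eq:CS} pointwise.
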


\begin{proof}
Differentiating $E$, substituting \eqref{eq:chq-mu}, and integrating by parts
gives $\tfrac{\mathrm d}{\mathrm dt}E=\inner{\mu}{\pt\phi}+\inner{q}{\pt q}$.
Pairing \eqref{eq:chq-phi} with $\mu$ and \eqref{eq:chq-q} with $q$ (that is,
multiplying each equation by the stated function and integrating over
$\Om$), the two resulting cross terms are equal rather than
cancelling, by the sign convention adopted in \eqref{eq:chq-phi}. The
elementary identity
\begin{equation}\label{eq:CS}
b|u|^2-2c\,(u\cdot w)+d_0|w|^2
=\Bigl(b-\tfrac{c^2}{d_0}\Bigr)|u|^2+\tfrac1{d_0}\bigl|c\,u-d_0 w\bigr|^2
\end{equation}
applied pointwise with $u=\grad\mu$, $w=\grad(Aq)$ then gives
\eqref{eq:cont-D}. This continuous dissipation identity is already implicit
in \cite{Brunk2025,Spiller2021}. We record it here, with the sign
convention and coercivity threshold made explicit, as the baseline the
discrete scheme of \Cref{sec:scheme} is required to reproduce. Full proof
in \Cref{app:derivations}.
\end{proof}

The energy $E$ satisfies $\tfrac{\mathrm d}{\mathrm dt}E=-\Dcont$  but nothing guarantees $\Dcont\ge0$ for an arbitrary choice of the coefficients $b,c,A,\kappa$. Checking this directly is exactly what produces the one structural condition the coefficients must satisfy. The first two terms in the dissipation rate \eqref{eq:cont-D} form a
quadratic expression in $\grad\mu$ and $\grad(A(\phi)q)$. By
\eqref{eq:CS}, this expression is non-negative for arbitrary values
of these gradients if and only if
\begin{equation}\label{eq:coerc-cont}
b(\phi)d_0\geq c(\phi)^2
\qquad\text{for all admissible }\phi.
\end{equation}
The remaining terms in \eqref{eq:cont-D} are non-negative because
$\kappa(\phi)>0$ and $\epsq>0$. Thus, the coercivity condition
\eqref{eq:coerc-cont} guarantees that the continuous energy is
non-increasing. This condition is Assumption (A1) of
\cite{Brunk2025}.

\subsection{The SAV reformulation}
Proposition~\ref{prop:cont-energy} establishes the continuous energy law. We now focus on discretising \eqref{eq:chq}. The only nonlinear term is $f'(\phi)$ in the $\mu$-equation \eqref{eq:chq-mu}, which comes from the double-well potential $f$. The scalar auxiliary variable (SAV) method removes this nonlinearity by introducing a scalar unknown $r(t)$, a single real number at each time, rather than a new field over $\Om$. The resulting enlarged system has exactly the same solutions as the original system, but no longer contains $f'(\phi)$ explicitly.

The construction starts by separating the part of the energy that is
already at most quadratic from the part that carries the nonlinearity.
Specifically, we write $E=\Elin+\Ef$ with
\[
\Elin\eqdef\int_\Om\Bigl[\tfrac\gamma2|\grad\phi|^2+\tfrac12q^2\Bigr]\dx ,
\qquad
\Ef\eqdef\int_\Om f(\phi)\,\dx .
\]
Since $f$ is bounded below, we may fix a constant $\CO$ large enough that
$\Ef(\phi)+\CO\ge C_1>0$ for every admissible $\phi$ (any $\CO>0$ works for
our benchmark, where $f\ge0$ already; we use $\CO=1$); this positivity is
needed so that the square root 
\[
r(t)\eqdef\sqrt{\Ef(\phi(t))+\CO} ,
\qquad
G(\phi)\eqdef\frac{f'(\phi)}{\sqrt{\Ef(\phi)+\CO}} 
\]
is always well defined. Here $r(t)$ is a single scalar that tracks, at every instant, the size of
the nonlinear part of the energy along the solution, and $G$ is a new
coefficient function chosen for exactly one reason: by construction,
$r\,G(\phi)=f'(\phi)$ whenever $r$ happens to take the value
$\sqrt{\Ef(\phi)+\CO}$. This suggests replacing the troublesome $f'(\phi)$
in \eqref{eq:chq-mu} by the product $r\,G(\phi)$ and treating $r$ as an
independent unknown, governed by its own evolution equation,
\[
\dot r=\half\inner{G(\phi)}{\pt\phi} ,
\]
appended to the system. The evolution equation for $r$ follows directly from differentiating $r=\sqrt{\Ef(\phi)+\CO}$ using the chain rule (\Cref{app:derivations}). Consequently, if this relation holds initially, it is preserved throughout the evolution. Introducing $r$ also allows $G(\phi)$ to be treated as a coefficient function, alongside $b,c,A,$ and $\kappa$. In the discrete scheme of \Cref{sec:scheme}, these coefficients are evaluated at the extrapolated state. This reformulation makes the $\mu$-equation linear in the joint unknowns $(\phi,\mu,q,r)$, regardless of the form of the double-well potential $f$.

This substitution is legitimate because the enlarged system has
exactly the same solutions as the original \eqref{eq:chq}. We show in \Cref{app:derivations} that the
appended equation for $r$ forces $r(t)\equiv\sqrt{\Ef(\phi(t))+\CO}$ along
every solution, given matching initial data, so that
$r(t)G(\phi(t))\equiv f'(\phi(t))$ identically and the SAV
$\mu$-equation reduces back to \eqref{eq:chq-mu}. Along any such solution
the modified energy
\[
\Ered\eqdef\Elin+r^2-\CO
\]
is therefore identically equal to the true energy $E$, and satisfies a
dissipation law of exactly the same completed-square form as before,
$\tfrac{\mathrm d}{\mathrm dt}\Ered=-\Dmod\le0$, with $\Dmod$ built from
\eqref{eq:cont-D} in the same way, only with $f'(\phi)$ replaced by
$rG(\phi)$ throughout. Note that $q$'s
contribution to the energy, $\tfrac12q^2$, is already quadratic, so it
needs no auxiliary variable of its own; only the single genuinely
nonlinear piece $f$ is quadratised, and a single extra scalar $r$ suffices
for the whole system, regardless of how finely $\Om$ is resolved.

One caveat carries forward to the numerical scheme and is worth flagging
now. The identity $r(t)\equiv\sqrt{\Ef(\phi(t))+\CO}$ above holds exactly
only because, in continuous time, two quantities that start out equal and
change at the same rate at every instant must stay equal forever. 
Once time is replaced by discrete steps (\Cref{sec:scheme}), $r^{n+1}$ is no
longer obtained from that continuous argument: it is computed by advancing
a difference equation one step at a time, and nothing forces this
step-by-step update to reproduce $\sqrt{\Ef(\phi^{n+1})+\CO}$ exactly. The
two quantities, $r^n$ and $\sqrt{\Ef(\phi^n)+\CO}$, can therefore differ by
a small, controlled amount. This is exactly where the modified and true
discrete energies can differ (\Cref{sec:modvstrue} discusses this in
detail).

\section{The SAV--BDF2 scheme}\label{sec:scheme}

\Cref{sec:cont-energy} established that the continuous, SAV-reformulated
system dissipates its modified energy exactly, with no restriction on the
coefficients beyond coercivity. Turning this into a usable numerical
method needs two further ingredients: a
way to approximate the time derivatives $\pt\phi,\pt q,\dot r$ to second
order in the step size $\dt$, and a way to evaluate the phase-dependent
coefficients $b,c,A,\kappa,G$ without reintroducing the very nonlinearity
the SAV device was meant to remove. The first is supplied by a standard
second-order finite-difference formula in time, BDF2. The second is this
paper's central device, already previewed in \Cref{sec:positioning}:
evaluate every coefficient not at the unknown new state $\phi^{n+1}$, but
at an already-known approximation to it, extrapolated from the two
previous time levels. This section makes both ingredients precise and
states the resulting scheme. Then in \Cref{sec:thm}, we show that combining them
preserves the energy identity exactly.

\subsection{Notation}\label{sec:notation}
We define the scalar product as $\inner{u}{v}\eqdef\int_\Om uv\dx$ and the norm as $\norm{u}^2\eqdef\inner{u}{u}$. On the periodic box, with no boundary terms, we have

\begin{subequations}\label{eq:ibp}
\begin{align}
\inner{\dv F}{g}&=-\inner{F}{\grad g}, \label{eq:ibp1}\\
\inner{\lap u}{g}&=-\inner{\grad u}{\grad g} . \label{eq:ibp2}
\end{align}
\end{subequations}
The second identity follows from the first by taking $F=\grad u$.

Time derivatives are approximated by BDF2, the second-order backward
differentiation formula: a standard finite-difference rule that
approximates $\dot u(t_{n+1})$ using the current level together with the
two preceding ones, $u^{n+1},u^n,u^{n-1}$ (as opposed to the
simpler, only first-order-accurate backward-Euler rule, which uses just
$u^{n+1},u^n$). Write $t_n=n\dt$ for the time levels and $u^n\approx u(t_n)$
for a numerical approximation at level $n$; a superscript $\str$ will
denote a quantity frozen at the extrapolated state $\phi^\str$, introduced
below. Specifically, we define the BDF2 first and second time differences
\begin{equation}\label{eq:bdf2-ops}
\Dtwo u^{n+1}\eqdef\frac{3u^{n+1}-4u^{n}+u^{n-1}}{2\dt},
\qquad
\dd u^{n+1}\eqdef u^{n+1}-2u^{n}+u^{n-1}.
\end{equation}
$\Dtwo u^{n+1}$ is the actual BDF2 approximation to $\dot u(t_{n+1})$,
accurate to $O(\dt^2)$; $\dd u^{n+1}$ is not itself an approximation to
anything, but a bookkeeping quantity that will appear naturally, in
\Cref{sec:thm}, as the non-negative remainder left over after telescoping
a BDF2 term into a discrete energy difference.

\subsection{Freezing the coefficients}

The one remaining obstacle to a linear scheme is that $b,c,A,\kappa$ (and,
after \Cref{sec:cont-energy}'s reformulation, $G$) are themselves
functions of $\phi$. Evaluating them at the unknown $\phi^{n+1}$ being
solved for would make every equation below nonlinear in $\phi^{n+1}$,
exactly the nonlinearity a Newton-type solver would have to iterate on
instead; evaluating them at an already known approximation to
$\phi(t_{n+1})$ keeps every equation linear in the actual unknowns, at the
cost of a small, controllable extrapolation error. Since $\phi^n,\phi^{n-1}$
are already known once we reach time level $n+1$, we define the extrapolation
\[
\phi^\str\eqdef2\phi^n-\phi^{n-1} ,
\]
a new value recomputed at every step from the two preceding ones. So, $\phi^\str$ depends on $n$ exactly as $\phi^n$ does, but we write $\phi^\str$
rather than $\phi^{\star,n+1}$ purely to keep the notation light. $\phi^\str$  approximates $\phi(t_{n+1})$ to $O(\dt^2)$, of the same order as that achieved by BDF2.
However, using $\phi^\str=\phi^n$ instead would only be accurate to
$O(\dt)$ and would degrade the scheme's overall accuracy. For the
specific coefficient forms of the benchmark problem (\cite{Brunk2025},
given in full in \Cref{sec:numerics}), $A(\phi)\sim\cot(\pi\phi)$ and
$\kappa(\phi)$ blow up as $\phi\to0,1$; to guard against $\phi^\str$
straying outside the physically admissible range before the scheme has had
a chance to correct it, we additionally clamp it to the interval
$[\delta_c,1-\delta_c]$,
\[
\phi^\str_b\eqdef\clamp(\phi^\str,\delta_c,1-\delta_c)
\eqdef\min\bigl(\max(\phi^\str,\delta_c),\,1-\delta_c\bigr) ,
\qquad \delta_c\sim10^{-3}
\]
(a numerical safeguard only, verified inactive throughout the convergence
regime studied below). Every phase-dependent coefficient is then frozen at
this same argument,
\begin{equation}\label{eq:frozen}
\bs\eqdef b(\phi^\str_b),\ \
\cs\eqdef c(\phi^\str_b),\ \
\As\eqdef A(\phi^\str_b),\ \
\ks\eqdef\kappa(\phi^\str_b),\ \
\Gs\eqdef\frac{f'(\phi^\str)}{\sqrt{\Ef(\phi^\str)+\CO}} ,
\end{equation}
using the same clamped state for all four of $b,c,A,\kappa$, for
uniformity, although strictly the clamp only matters for $A,\kappa$, whose
specific forms would otherwise be singular. The parameters $b,c$ remain well behaved throughout the range for the coefficient forms used here and $\Gs$ uses the
unclamped $\phi^\str$ throughout, since $f'$ and $\Ef$ do not develop
singularities either. This is the step, previewed in
\Cref{sec:positioning}, on which the rest of the paper turns: freezing
all of $b,c,A,\kappa$ at the same state, rather than at
several different ones or at $\phi^{n+1}$ itself, is exactly what
preserves the coupled cross-diffusion structure of \eqref{eq:chq} intact,
as \Cref{sec:thm} shows next.

\begin{assumption}[Frozen coercivity]\label{ass:coerc}
We assume that $\bs(x)-\cs(x)^2/d_0\ge0$ for all $x$. This is the same coercivity condition as \eqref{eq:coerc-cont}, evaluated at the frozen, clamped state $s=\phi^\str_b(x)$. For the benchmark coefficient set, the condition is satisfied with $\bs-\cs^2/d_0=\epsb$ at every point. Thus, the coercivity margin is strictly positive and independent of the time step $\dt$.
\end{assumption}

\subsection{The discrete scheme}
The scheme advances only the variables $\phi$, $q$, and $r$, since these are the three quantities governed by evolution equations in \eqref{eq:chq}. Accordingly, the scheme must retain their values at the previous two time levels. The chemical potential $\mu$, by contrast, is defined algebraically by \eqref{eq:chq-mu} as an instantaneous function of $\phi$ and, following the SAV reformulation, $r$, at the current time level. Since $\mu$ has no independent evolution equation, its value is recomputed at each time step from the current values of these variables. Consequently, neither $\mu^n$ nor $\mu^{n-1}$ is required by the scheme, and neither appears in the formulation below.

Collecting the pieces above, the scheme is obtained from the continuous
system \eqref{eq:chq} by three simultaneous replacements: every time
derivative $\partial_t$ is replaced by the BDF2 operator $\Dtwo$, every
phase-dependent coefficient is replaced by its frozen value
\eqref{eq:frozen}, and the SAV auxiliary equation for $r$ from
\Cref{sec:cont-energy} is adjoined as a fourth equation. We label the
resulting four equations (S1)--(S4) (``S'' for ``scheme'') to keep them
visually distinct from the continuous model's \eqref{eq:chq-phi}--\eqref{eq:chq-q}.
For $n\ge1$, given $(\phi^{n-1},q^{n-1},r^{n-1})$ and $(\phi^{n},q^{n},r^{n})$,
we find $(\phi^{n+1},\mu^{n+1},q^{n+1},r^{n+1})$ solving
\begin{subequations}\label{eq:S}
\begin{align}
\Dtwo\phi^{n+1} &= \dv\!\bigl(\bs\grad\mu^{n+1}-\cs\grad(\As q^{n+1})\bigr),
   \tag{S1}\label{eq:S1}\\
\mu^{n+1} &= -\gamma\lap\phi^{n+1}+r^{n+1}\Gs, \tag{S2}\label{eq:S2}\\
\Dtwo q^{n+1} &= -\ks q^{n+1}
   -\As\,\dv\!\bigl(\cs\grad\mu^{n+1}-d_0\grad(\As q^{n+1})\bigr)+\epsq\lap q^{n+1},
   \tag{S3}\label{eq:S3}\\
\Dtwo r^{n+1} &= \half\inner{\Gs}{\Dtwo\phi^{n+1}}. \tag{S4}\label{eq:S4}
\end{align}
\end{subequations}
By construction, each equation is linear in the unknowns
$(\phi^{n+1},\mu^{n+1},q^{n+1},r^{n+1})$, since all coefficients are
evaluated using the frozen, already-known quantities in \eqref{eq:frozen},
rather than the unknown $\phi^{n+1}$ itself. The only coupling that
prevents the four equations from being block-diagonal arises in
\eqref{eq:S4}, where the scalar unknown $r^{n+1}$ is coupled to all
degrees of freedom of $\phi^{n+1}$ through the single scalar quantity
$\inner{\Gs}{\Dtwo\phi^{n+1}}$. \Cref{sec:structure} exploits this
specific structure to avoid solving the full coupled system simultaneously.

BDF2 itself needs two prior levels to get started, so the very first step,
from $(\phi^0,q^0,r^0)$ to $(\phi^1,q^1,r^1)$, cannot use \eqS
directly. We use a first-order backward-Euler (BE) self-starting step by
default: a simpler, one-previous-level scheme whose local error is
$O(\dt^2)$, small enough that it does not degrade the second-order global
rate of the rest of the trajectory (checked numerically,
\Cref{sec:temporal}). A Richardson-extrapolated variant, built from two
half-steps of BE, is also available for a formally second-order-accurate
start.

\section{The discrete energy identity}\label{sec:thm}

Discretising a dissipative system can destroy the property that makes it dissipative. A time-stepping scheme does not automatically inherit the energy decay of the continuous system it approximates. This is a real concern for our scheme: the freezing of the coefficients (\Cref{sec:scheme}) makes the equations linear, but it could have cost us the energy law. The discrete energy might then decrease only approximately, with an error that grows with $\dt$, or it might decrease only for sufficiently small $\dt$.

The scheme \eqS satisfies an exact discrete analogue of \Cref{prop:cont-energy}, with no approximation and no restriction on $\dt$. Specifically, we construct a discrete energy $\Emod^n$ and a discrete dissipation $\Dmod^{n+1}$, which have the same completed-square structure as the continuous dissipation $\Dcont$. They satisfy

$$\Emod^{n+1}-\Emod^n = -\dt\,\Dmod^{n+1} - R^{n+1},$$

where $R^{n+1}\ge0$ is an explicit non-negative remainder. This holds exactly, for every $\dt>0$. We call a scheme unconditionally energy stable in this sense: the modified energy decreases at every step, by construction, for any step size, and not only once $\dt$ is sufficiently small.

Since BDF2 is a two-step method, the discrete energy $\Emod^n$ depends on two consecutive time levels, through the quadratic form $\Phimix$ (\Cref{sec:notation}). The result relies on the coefficient freezing of \Cref{sec:scheme}. Because all coefficients in \eqS are evaluated at the same extrapolated state, the discrete calculation parallels the continuous proof of \Cref{prop:cont-energy} step by step, and no additional error terms need to be controlled.

\begin{theorem}[Discrete energy--dissipation identity]\label{thm:main}
Assume $d_0>0$ constant, $\epsq\ge0$, $\ks\ge0$ pointwise (i.e.\ at every
point $x\in\Om$, not merely on average), \Cref{ass:coerc}, and a spatial
discretisation with exact discrete integration by parts under which
$\grad\Dtwo=\Dtwo\grad$ (differentiating in space and then applying the
BDF2 time-difference operator gives the same result as doing these two
steps in the opposite order) and the frozen-coefficient products entering
\eqref{eq:CS} are evaluated and integrated exactly at each degree of
freedom, that is, computing a product such as $\bs|\grad\mu^{n+1}|^2$
pointwise at the grid points (or retained Fourier modes) and then summing
it discretely gives exactly the same number the continuous integral would,
with no quadrature or aliasing error. This holds for the periodic
Fourier--Galerkin discretisation without dealiasing used below. With $\Phimix(u,v)\eqdef\norm{u}^2+\norm{2u-v}^2$,
\begin{align}
\Emod^n &\eqdef \tfrac14\Bigl[\gamma\,\Phimix(\grad\phi^n,\grad\phi^{n-1})
   +\Phimix(q^n,q^{n-1})+2\,\Phimix(r^n,r^{n-1})\Bigr]-\CO ,
   \label{eq:Emod}\\
\Dmod^{n+1} &\eqdef \int_\Om\Bigl(\bs-\tfrac{\cs^2}{d_0}\Bigr)|\grad\mu^{n+1}|^2
   +\tfrac1{d_0}\int_\Om\bigl|\cs\grad\mu^{n+1}-d_0\grad(\As q^{n+1})\bigr|^2\notag\\
   &\qquad{}+\inner{\ks q^{n+1}}{q^{n+1}}+\epsq\norm{\grad q^{n+1}}^2 \ \ (\ge0),
   \label{eq:Dmod}
\end{align}
solutions of \eqS satisfy, for \emph{every} $\dt>0$,
\begin{equation}\label{eq:star}
\Emod^{n+1}-\Emod^n
+\tfrac14\Bigl[\gamma\norm{\grad\dd\phi^{n+1}}^2+\norm{\dd q^{n+1}}^2
   +2\,(\dd r^{n+1})^2\Bigr]
+\dt\,\Dmod^{n+1}
=0 .
\end{equation}
\end{theorem}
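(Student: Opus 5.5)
The plan is to mimic the proof of \Cref{prop:cont-energy} at the discrete level. I would test \eqref{eq:S1} with $2\dt\,\mu^{n+1}$, \eqref{eq:S2} with $-2\dt\,\Dtwo\phi^{n+1}$, \eqref{eq:S3} with $2\dt\,q^{n+1}$, and \eqref{eq:S4} with $4\dt\,r^{n+1}$, and then add the four identities. The only tool for the time-difference terms is the standard BDF2 polarisation identity. For any vectors $u^{n+1},u^n,u^{n-1}$ in an inner-product space it reads
\[
2\bigl(3u^{n+1}-4u^n+u^{n-1},\,u^{n+1}\bigr)
=\Phimix(u^{n+1},u^n)-\Phimix(u^n,u^{n-1})+\norm{u^{n+1}-2u^n+u^{n-1}}^2 ,
\]
which I would verify by direct expansion with $\Phimix(u,v)=\norm{u}^2+\norm{2u-v}^2$. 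Equivalently, $4\dt\,(\Dtwo u^{n+1},u^{n+1})=\Phimix(u^{n+1},u^n)-\Phimix(u^n,u^{n-1})+\norm{\dd u^{n+1}}^2$.

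\textbf{Step 1 (the $\phi$--$\mu$ block).} Pair \eqref{eq:S2} with $\Dtwo\phi^{n+1}$. Using \eqref{eq:ibp2} and the commutation $\grad\Dtwo=\Dtwo\grad$, this gives
\[
\inner{\mu^{n+1}}{\Dtwo\phi^{n+1}}=\gamma\inner{\grad\phi^{n+1}}{\Dtwo\grad\phi^{n+1}}+r^{n+1}\inner{\Gs}{\Dtwo\phi^{n+1}}.
\]
The last term equals $2r^{n+1}\Dtwo r^{n+1}$ by \eqref{eq:S4}; this is where the SAV device removes the nonlinearity. Applying the polarisation identity to $\grad\phi$ (vector-valued in $L^2$) and to the scalar $r$ produces the $\gamma\Phimix(\grad\phi)$ and $2\Phimix(r)$ parts of $4\Emod$, together with the numerical-dissipation terms $\gamma\norm{\grad\dd\phi^{n+1}}^2$ and $2(\dd r^{n+1})^2$. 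On the other side, pairing \eqref{eq:S1} with $\mu^{n+1}$ and using \eqref{eq:ibp1} gives
\[
-\inner{\bs\grad\mu^{n+1}}{\grad\mu^{n+1}}+\inner{\cs\grad(\As q^{n+1})}{\grad\mu^{n+1}}.
\]

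\textbf{Step 2 (the $q$ block).} Pair \eqref{eq:S3} with $q^{n+1}$. The left side is handled by the polarisation identity again and gives the $\Phimix(q)$ part of $4\Emod$ plus $\norm{\dd q^{n+1}}^2$. On the right, the terms $-\inner{\ks q}{q}$ and $-\epsq\norm{\grad q}^2$ are immediate. The essential point is the cross term $-\inner{\As\,\dv(\cdots)}{q^{n+1}}$. Because $\As$ is a multiplication by a fixed function, I would move it onto $q^{n+1}$ to get $-\inner{\dv(\cdots)}{\As q^{n+1}}$, and then \eqref{eq:ibp1} gives
\[
\inner{\cs\grad\mu^{n+1}-d_0\grad(\As q^{n+1})}{\grad(\As q^{n+1})}.
\]
Its $\cs$ part coincides exactly with the cross term from Step~1. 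This works only because the same $\cs,\As$ appear in \eqref{eq:S1} and \eqref{eq:S3}, which is the common-freezing observation of \Cref{sec:positioning}. Adding the two blocks, the flux terms therefore total $-\int[\bs|u|^2-2\cs\,u\cdot w+d_0|w|^2]$ with $u=\grad\mu^{n+1}$ and $w=\grad(\As q^{n+1})$.

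\textbf{Step 3 (completing the square and assembling).} Apply \eqref{eq:CS} pointwise with the frozen coefficients. This converts the flux total into the first two terms of $\Dmod^{n+1}$. The constant $-\CO$ cancels in the difference $\Emod^{n+1}-\Emod^n$. Multiplying through by $\dt$ and collecting terms with the weights fixed above ($\tfrac14$ of the polarised sums) then gives \eqref{eq:star}.

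The step I expect to be the main obstacle is the discrete justification of the manipulations in Step~2 and Step~3 rather than the algebra itself. Moving $\As$ across the inner product and applying \eqref{eq:CS} pointwise both require that the products $\As q$, $\cs\grad\mu$ and $\bs|\grad\mu|^2$ are represented and integrated exactly. This is precisely the hypothesis of exact evaluation at each degree of freedom without aliasing stated in the theorem. So I would check carefully that, in the Fourier--Galerkin setting, the discrete inner product is the grid sum, that multiplication by $\As$ is self-adjoint for it, and that $\dv$ and $\grad$ are exact adjoints.
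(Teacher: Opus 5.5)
Your proposal is correct and follows essentially the same route as the paper's proof. You pair \eqref{eq:S2} with $\Dtwo\phi^{n+1}$ and \eqref{eq:S1} with $\mu^{n+1}$, use \eqref{eq:S4} to turn the SAV term into $2r^{n+1}\Dtwo r^{n+1}$, test \eqref{eq:S3} with $q^{n+1}$ so that the commonly frozen $\cs,\As$ make the cross terms double rather than cancel, complete the square with \eqref{eq:CS}, and telescope with the Nevanlinna--Odeh identity (your polarisation identity is exactly twice \eqref{eq:G}); the only item you leave implicit is the parenthetical claim $\Dmod^{n+1}\ge0$, which the paper checks term by term from \Cref{ass:coerc}, $d_0>0$, $\ks\ge0$ and $\epsq\ge0$.
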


\begin{proof}
Testing \eqref{eq:S2} with $\Dtwo\phi^{n+1}$ and \eqref{eq:S1} with
$\mu^{n+1}$ gives two expressions for the same pairing
$\inner{\mu^{n+1}}{\Dtwo\phi^{n+1}}$; equating them, and using \eqref{eq:S4}
to replace $r^{n+1}\inner{\Gs}{\Dtwo\phi^{n+1}}$ by $2r^{n+1}\Dtwo r^{n+1}$,
isolates the frozen-coefficient terms on one side. Adding the result of
testing \eqref{eq:S3} with $q^{n+1}$, the two cross terms combine (add, as in
\Cref{prop:cont-energy}'s proof, because $\bs,\cs,\As,\ks$ are all evaluated
at the same $\phi^\str_b$), and \eqref{eq:CS} converts them into
exactly $-\Dmod^{n+1}$, with no $\|\phi^{n+1}-\phi^\str\|$ remainder, since
the frozen coefficients carry no $\phi^{n+1}$-dependence. The G-stability
concept is due to Dahlquist \cite{Dahlquist1976}; the specific multiplier
identity used here, for BDF2, is due to Nevanlinna and Odeh
\cite{NevanlinnaOdeh1981}:
\begin{equation}\label{eq:G}
\inner{3a^{n+1}-4a^n+a^{n-1}}{a^{n+1}}
=\half\bigl[\Phimix(a^{n+1},a^n)-\Phimix(a^n,a^{n-1})\bigr]
 +\half\norm{a^{n+1}-2a^n+a^{n-1}}^2 ,
\end{equation}
applied to $a=\grad\phi,q,r$, then turns the left-hand side into the
telescoping difference $\Emod^{n+1}-\Emod^n$ plus the stated non-negative
remainder, giving \eqref{eq:star}. Non-negativity of $\Dmod^{n+1}$ follows
from \Cref{ass:coerc}, $\ks\ge0$, $\epsq\ge0$. Full proof in
\Cref{app:derivations}.
\end{proof}

\begin{corollary}[Unconditional energy stability]\label{cor:stab}
Since every term after $\Emod^{n+1}-\Emod^n$ in \eqref{eq:star} is
non-negative,
\[
\Emod^{n+1}\le\Emod^n \qquad\text{for every }\dt>0 .
\]
That is, the SAV--BDF2 scheme is unconditionally energy stable: the
modified discrete energy never increases, at any step size. Since
additionally $\Emod^n\ge-\CO$ is bounded below, the monotone sequence
$\{\Emod^n\}$ converges.
\end{corollary}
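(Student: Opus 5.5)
The corollary follows from \Cref{thm:main} alone, so the plan is to rearrange \eqref{eq:star} and check the signs of the extra terms. From \eqref{eq:star},
\[
\Emod^{n+1}-\Emod^n=-\tfrac14\Bigl[\gamma\norm{\grad\dd\phi^{n+1}}^2+\norm{\dd q^{n+1}}^2+2(\dd r^{n+1})^2\Bigr]-\dt\,\Dmod^{n+1}.
\]
The bracket is a sum of squared norms with $\gamma>0$, so it is non-negative. It remains to show $\Dmod^{n+1}\ge0$. I will check the four integrands in \eqref{eq:Dmod} one at a time:
\begin{enumerate}[(i)]
\item $(\bs-\cs^2/d_0)|\grad\mu^{n+1}|^2\ge0$ pointwise, by \Cref{ass:coerc}.
\item $\tfrac1{d_0}|\cs\grad\mu^{n+1}-d_0\grad(\As q^{n+1})|^2\ge0$, since $d_0>0$.
\item $\ks (q^{n+1})^2\ge0$, since $\ks\ge0$ pointwise.
\item $\epsq\norm{\grad q^{n+1}}^2\ge0$, since $\epsq\ge0$.
\end{enumerate}
Since $\dt>0$, the right-hand side above is non-positive, which gives $\Emod^{n+1}\le\Emod^n$. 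This holds for every $\dt>0$ and every $n\ge1$.

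The step needing the most care is the meaning of ``integrated exactly'' in the discrete setting. The pointwise non-negativity in (i)--(iv) must survive the discrete summation, and the hypotheses of \Cref{thm:main} say the discrete sums equal the continuous integrals. Integrating a pointwise non-negative function gives a non-negative number, so this holds.

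For the lower bound, I will use $\Phimix(u,v)=\norm{u}^2+\norm{2u-v}^2\ge0$ for each of the three pairs in \eqref{eq:Emod}. Together with $\gamma>0$, this gives $\Emod^n\ge-\CO$. The sequence $\{\Emod^n\}_{n\ge1}$ is then non-increasing and bounded below, so it converges by the monotone convergence theorem for real sequences. No use of the relation between $r^n$ and $\sqrt{\Ef(\phi^n)+\CO}$ is needed, because positivity comes from the squares alone.
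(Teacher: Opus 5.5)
Your proposal is correct and is essentially the paper's argument: you rearrange \eqref{eq:star}, observe that the $\dd$-remainder is a sum of squares, and get $\Dmod^{n+1}\ge0$ from exactly the four-term check in Step~5 of the proof of \Cref{thm:main}, while $\Phimix\ge0$ gives $\Emod^n\ge-\CO$ and monotone convergence finishes. Your aside on discrete summation is fine, though in a collocation setting it is more direct to note that a positively weighted grid sum of pointwise non-negative values is non-negative, with no need to invoke exact integration.
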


\begin{remark}[The discrete energy bounds the solution]\label{rem:bounded}
Corollary~\ref{cor:stab} says that the modified energy $\mathcal{E}^n$ never increases. A decreasing energy is only useful if it also limits how large the fields can become. This is indeed the case.

The energy $\mathcal{E}^n$ is a sum of squares of $\nabla\phi$, $q$ and $r$ at the two stored time levels $n$ and $n-1$. The two levels are combined through the form $\Phi(u,v)=\|u\|^2+\|2u-v\|^2$. This combination is positive: it vanishes only if $u=v=0$, and it is bounded below and above by multiples of $\|u\|^2+\|v\|^2$. Consequently, $\mathcal{E}^n+C_0$ is never smaller than a fixed positive multiple of
\[
  \gamma\left(\|\nabla\phi^n\|^2+\|\nabla\phi^{n-1}\|^2\right)
  + \|q^n\|^2+\|q^{n-1}\|^2
  + 2\left((r^n)^2+(r^{n-1})^2\right).
\]
Since $\mathcal{E}^n\le\mathcal{E}^1$ for all $n\ge1$, none of these terms can exceed a fixed multiple of $\mathcal{E}^1+C_0$. Hence $\|\nabla\phi^n\|$, $\|q^n\|$ and $|r^n|$ stay bounded for all times and for every $\Delta t>0$.

Of the three quantities just bounded, $r$ already carries the effect of
the bulk term $\Ef(\phi)$ by construction. What remains uncontrolled is
$\phi$ itself: $\Emod^n$ reaches $\phi$ only through $\grad\phi$, and a
gradient cannot see an additive constant, since $\phi$ and
$\phi+\text{const}$ have the same gradient. The size of $\phi$ itself is
therefore controlled separately.
By mass conservation (Proposition~\ref{prop:mass}), the spatial average of $\phi^n$ equals its initial value at every step. A field with fixed average and bounded
gradient on a periodic domain cannot be arbitrarily large (Poincar\'e
inequality), so $\|\phi^n\|$ is bounded as well.

In summary, the numerical solution cannot blow up: $\phi$, $q$ and $r$ remain bounded at all times, whatever the time step.
\end{remark}

\begin{remark}[Stability of the filtered scheme actually used]\label{rmk:filter}
The long-time physics runs (\Cref{sec:physics}) apply an optional $2/3$-rule
spectral filter $F$ to $(\phi,q)$ after every step (removing aliasing noise
from evaluating nonlinear coefficients on the collocation grid). Since $F$
is an orthogonal, $\grad$-commuting projection, a short argument (filtering
cannot increase $\Phimix$: $\Phimix(Fu,v)=\Phimix(u,v)-5\norm{(I-F)u}^2$ for
$Fv=v$; \Cref{app:derivations}) shows \eqref{eq:star} extends exactly
to the filtered scheme, with two explicit additional non-negative terms
penalising the filtered-out content. Hence, the scheme actually producing the
physics results shown in \Cref{sec:physics} is itself provably (not just empirically) stable for the same energy. We also confirmed numerically that both 
terms stay strictly positive, with genuine margin and not merely at
round-off, over real trajectories.
\end{remark}

\begin{remark}[Stability $\ne$ accuracy]
Equation \eqref{eq:star} does not show that $\Dmod^{n+1}$, built from the extrapolated
mobilities, approximates the physical dissipation $\Dcont$ to second order
under all circumstances. That is a consistency question settled,
numerically in \Cref{sec:numerics}. Also, $\Emod^n\ne E(\phi^n,q^n)$: the
$\Phimix$-form and $(r^n)^2-\CO$ each differ from the corresponding true
quantities by $O(\dt^2)$ (measured $\approx7\times10^{-6}<\dt^2$), so
$\Emod^n$ decreases monotonically while $E(\phi^n,q^n)$ may show $O(\dt^2)$
non-monotonicities. Every figure claiming ``energy decay'' states which
energy.\label{sec:modvstrue}
\end{remark}

\section{Structural properties}\label{sec:structure}

\Cref{sec:thm} showed that \emph{if} a solution of \eqS exists at
a given step, the discrete energy identity forces it to be well behaved:
the modified energy can only decrease. That conditional phrasing hides a
question energy stability alone does not answer: does \eqS, a
linear system coupling four unknown fields at every step, actually
\emph{have} a solution there, and is it unique? A scheme that behaves well
whenever it has a solution is of limited practical use if the underlying
linear system can fail to be solvable, or have more than one solution, at
some step sizes or some states; an implementation would then have no
guarantee that the linear solve it calls every step is even well posed.
This section closes that gap. \Cref{thm:solv} shows the linear system is
uniquely solvable at \emph{every} step, for \emph{every} $\dt>0$, reusing
the energy identity of \Cref{thm:main} itself as the proof's main tool.
\Cref{prop:mass} records a second structural property, exact mass
conservation, needed along the way.

\begin{theorem}[Unconditional unique solvability]\label{thm:solv}
Under the same assumptions as \Cref{thm:main}, and additionally
$\bs\ge b_1>0$, the linear system \eqS is uniquely solvable for every
$\dt>0$.
\end{theorem}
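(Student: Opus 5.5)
Since \eqS is a square linear system in finitely many unknowns (in the Fourier--Galerkin setting, $\phi^{n+1},\mu^{n+1},q^{n+1}$ on the retained modes plus the scalar $r^{n+1}$), existence follows from uniqueness. So the plan is to show that the homogeneous system has only the trivial solution. Set all the old data to zero, $\phi^n=\phi^{n-1}=q^n=q^{n-1}=0$ and $r^n=r^{n-1}=0$, while keeping the frozen coefficients $\bs,\cs,\As,\ks,\Gs$ fixed (they are known data). Then $\Dtwo u^{n+1}=\tfrac{3}{2\dt}u^{n+1}$ for $u=\phi,q,r$, and any solution $(\phi,\mu,q,r)$ of the homogeneous system is the difference of two solutions of \eqS.

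Next, apply the energy identity of \Cref{thm:main} to this homogeneous system, whose proof uses only the linear structure. Since the old levels vanish, $\Emod^n+\CO=0$. The identity \eqref{eq:star} then reads
\[
\tfrac14\bigl[\gamma\Phimix(\grad\phi,0)+\Phimix(q,0)+2\Phimix(r,0)\bigr]+\tfrac14\bigl[\gamma\norm{\grad\phi}^2+\norm{q}^2+2r^2\bigr]+\dt\,\Dmod=0 .
\]
Every term here is non-negative; for the $\Dmod$ term this uses \Cref{ass:coerc}, $\ks\ge0$ and $\epsq\ge0$. Since $\Phimix(u,0)=5\norm{u}^2$, each term must vanish. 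This gives $\grad\phi=0$, $q=0$ and $r=0$.

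It remains to recover $\phi$ and $\mu$.
\begin{enumerate}[(i)]
\item Vanishing of $\Dmod$ with $q=0$ gives $\int(\bs-\cs^2/d_0)|\grad\mu|^2=0$ and $\tfrac1{d_0}\int\cs^2|\grad\mu|^2=0$. Adding these yields $\int\bs|\grad\mu|^2=0$, and the extra hypothesis $\bs\ge b_1>0$ forces $\grad\mu=0$. This step is exactly where $b_1>0$ is needed: \Cref{ass:coerc} alone allows $\bs=\cs^2/d_0$, in which case the completed square controls only $\cs\grad\mu$.
\item With $\grad\phi=0$, $\phi$ is constant. Integrating (S1) over $\Om$, using \eqref{eq:ibp1} as in \Cref{prop:mass}, gives $\tfrac3{2\dt}\int\phi=0$, so $\phi=0$.
\item (S2) then gives $\mu=-\gamma\lap\phi+r\Gs=0$.
\end{enumerate}
Hence the kernel is trivial, and the system is uniquely solvable for every $\dt>0$.

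The main obstacle is justifying the use of the identity \eqref{eq:star} on the homogeneous problem. I would check that the derivation of \Cref{thm:main} uses only the linearity of \eqS in the new unknowns together with exact discrete integration by parts. Here the frozen $\Gs$ acts merely as fixed data, and (S4) with zero history still yields $2r\,\Dtwo r=r\inner{\Gs}{\Dtwo\phi}$. Should that check fail, the fallback is to redo the energy computation directly on the kernel. Test (S1) with $\mu$ and (S2) with $\tfrac3{2\dt}\phi$, use (S4) to eliminate the $r$ cross term, and test (S3) with $q$. This reproduces the same sum of squares, now with coefficient $\tfrac3{2\dt}$, and the same conclusion follows.
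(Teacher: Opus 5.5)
Your proof is correct and follows the paper's argument essentially step for step: zero history with the frozen coefficients held fixed, then \eqref{eq:star} with $\Phimix(u,0)=5\norm{u}^2$ to get $\grad\phi=0$, $q=0$, $r=0$, $\Dmod=0$, then the zero-mean argument for $\phi$, and finally \eqref{eq:S2} for $\mu$. One correction to your commentary, which applies equally to the paper's proof: step (i) is redundant, because once $\phi=0$ and $r=0$ are known, \eqref{eq:S2} gives $\mu=0$ pointwise, not merely up to a constant, so the extra hypothesis $\bs\ge b_1>0$ is not actually needed.
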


\begin{proof}
Since \eqS is linear, finite-dimensional, and square, existence
and uniqueness for every choice of data are equivalent to the
homogeneous system (all data set to zero) having only the trivial
solution. So take a solution of \eqS with
$\phi^n=\phi^{n-1}=q^n=q^{n-1}=r^n=r^{n-1}=0$. Plugging this data into
\eqref{eq:star} collapses it to a sum of non-negative terms equal to
zero, which forces $\grad\phi^{n+1}=q^{n+1}=r^{n+1}=0$ and
$\Dmod^{n+1}=0$. Mass conservation (\Cref{prop:mass}) then pins
$\phi^{n+1}\equiv0$. With $q^{n+1}=0$, the condition $\Dmod^{n+1}=0$
reduces to $\int_\Om\bs|\grad\mu^{n+1}|^2=0$. Since $\bs\ge b_1>0$, this
forces $\grad\mu^{n+1}=0$, so $\mu^{n+1}$ is spatially constant, though
not yet known to be zero. Equation \eqref{eq:S2}, with $\phi^{n+1}\equiv0$
and $r^{n+1}=0$ already established, gives $\mu^{n+1}=0$ directly,
pinning that constant. So the homogeneous system has only the trivial
solution, and \eqS is uniquely solvable for every $\dt>0$. Full
proof in \Cref{app:derivations}.
\end{proof}

\begin{proposition}[Exact mass conservation]\label{prop:mass}
$M^n\eqdef\int_\Om\phi^n$ satisfies $M^n\equiv M^0$ for every $n$.
\end{proposition}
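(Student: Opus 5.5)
The plan is to integrate \eqref{eq:S1} over $\Om$, show that the BDF2 difference of the mass vanishes at every step, and then handle the backward-Euler starting step separately. By \eqref{eq:ibp1} with $g\equiv1$, the right-hand side of \eqref{eq:S1} integrates to zero:
\[
\int_\Om \dv\!\bigl(\bs\grad\mu^{n+1}-\cs\grad(\As q^{n+1})\bigr)\dx
=-\inner{\bs\grad\mu^{n+1}-\cs\grad(\As q^{n+1})}{\grad 1}=0 .
\]
This holds for any flux, and no structure of the frozen coefficients is used. The only thing needed from the spatial discretisation is that it is exact for the zero mode. In the Fourier--Galerkin setting, the divergence of any retained vector field has zero mean coefficient, so this is immediate even with collocation-evaluated products.

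Integrating the left-hand side gives $\Dtwo M^{n+1}=(3M^{n+1}-4M^n+M^{n-1})/(2\dt)=0$ for every $n\ge1$. This is a linear two-step recurrence with characteristic polynomial $3z^2-4z+1=(z-1)(3z-1)$. Its general solution is therefore $M^n=\alpha+\beta\,3^{-n}$. Constancy $M^n\equiv M^0$ follows by induction once the two starting values agree, $M^1=M^0$.

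The starting step has to be checked on its own. The backward-Euler step reads $(\phi^1-\phi^0)/\dt=\dv(\cdots)$, and the same integration by parts gives $M^1=M^0$. The Richardson variant is a linear combination of BE solutions with weights summing to one, and each of those conserves mass, so it also gives $M^1=M^0$. The recurrence then gives $M^{n+1}=\tfrac43M^0-\tfrac13M^0=M^0$ inductively.

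This step, rather than the integration, is the only potential obstacle. A start that did not conserve mass exactly would leave a decaying $3^{-n}$ mode, not exact conservation. For the filtered variant of \Cref{rmk:filter}, the $2/3$-rule filter leaves the zero Fourier mode untouched, so mass is preserved there too.

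Finally, the argument never uses the value of $r^{n+1}$ or the specific form of the coefficients. It therefore applies to whatever solution \Cref{thm:solv} provides, and it can in turn be used inside that proof, including for the homogeneous system. There the identity reads $3M^{n+1}=0$, so the fixed mean is zero, which is what pins $\phi^{n+1}\equiv0$.
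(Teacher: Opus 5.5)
Your proposal is correct and matches the paper's proof. Both test \eqref{eq:S1} with the constant $1$ to remove the divergence term, obtain $3M^{n+1}-4M^n+M^{n-1}=0$, and close the induction with the mass-conserving backward-Euler start $M^1=M^0$. Your extra observations are sound refinements that the paper leaves implicit: the characteristic roots $1$ and $1/3$, the Richardson start as a weight-one combination of mass-conserving solves, the filter preserving the zero mode, and the per-step form $3M^{n+1}=0$ used for the homogeneous system in \Cref{thm:solv}.
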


\begin{proof}
Test \eqref{eq:S1} with the constant function $1$. By \eqref{eq:ibp1}
with $g=1$, $\inner{\dv F}{1}=-\inner{F}{\grad 1}=0$ for any flux
$F$, since the gradient of a constant is zero; so the right-hand side of
\eqref{eq:S1} integrates to exactly zero, whatever $\mu^{n+1},q^{n+1}$
turn out to be. The left-hand side, integrated, is
$\int_\Om\Dtwo\phi^{n+1}=\tfrac1{2\dt}\bigl(3M^{n+1}-4M^n+M^{n-1}\bigr)$
by the definition \eqref{eq:bdf2-ops} of $\Dtwo$, so
\begin{equation}\label{eq:mass-rec}
3M^{n+1}-4M^n+M^{n-1}=0
\end{equation}
at every step $n\ge1$: a linear recursion relating three consecutive mass
values, forced purely by the structure of \eqref{eq:S1}, regardless of
$\mu^{n+1},q^{n+1}$. The same argument, applied to the (simpler)
backward-Euler startup step, gives $M^1=M^0$. We now show
$M^n\equiv M^0$ for every $n\ge0$ by induction: this holds for $n=0,1$ by
definition and by the startup property just established. Suppose it
holds up to some $n\ge1$, i.e.\ $M^{n-1}=M^n=M^0$; then
\eqref{eq:mass-rec} gives $3M^{n+1}=4M^n-M^{n-1}=4M^0-M^0=3M^0$, i.e.\
$M^{n+1}=M^0$, extending the claim to $n+1$. Hence $M^n\equiv M^0$ for
every $n$.
\end{proof}

\section{Accuracy and implementation}\label{sec:accuracy}

Theorems~\ref{thm:main} and \ref{thm:solv} establish that the scheme is
unconditionally stable and solvable; neither says anything about whether
it is actually \emph{accurate}, or what turning
\eqref{eq:S1}--\eqref{eq:S4} into working code involves. This section
addresses both in turn. \Cref{alg:step} summarises the computation
performed at every time step of the implementation used throughout the
rest of the paper; the text below justifies why that computation is
second-order accurate, and fills in the specific spatial discretisation
and linear solver \Cref{alg:step} leaves unspecified.

\begin{algobox}[t]
\centering
\fbox{\begin{minipage}{0.93\linewidth}
\textbf{One time step of the SAV--BDF2 scheme, level $n\to n+1$ (for $n\ge1$).}
\begin{enumerate}
\item \textbf{Extrapolate:} $\phi^\star \leftarrow 2\phi^n-\phi^{n-1}$.
\item \textbf{Clamp:} $\phi^\star_b \leftarrow \clamp(\phi^\star,\delta_c,1-\delta_c)$.
\item \textbf{Freeze coefficients:}\\
$\bs,\cs,\As,\ks \leftarrow b(\phi^\star_b),\,c(\phi^\star_b),\,A(\phi^\star_b),\,\kappa(\phi^\star_b)$,
\quad $\Gs \leftarrow f'(\phi^\star)/\sqrt{\Ef(\phi^\star)+\CO}$.
\item \textbf{Solve} the linear system \eqref{eq:S1}--\eqref{eq:S4} for
$(\phi^{n+1},\mu^{n+1},q^{n+1},r^{n+1})$.
\item \textbf{Filter (optional):} $\phi^{n+1}\leftarrow F\phi^{n+1}$,
$q^{n+1}\leftarrow Fq^{n+1}$ ($2/3$-rule; long-time runs of
\Cref{sec:physics} only, \Cref{rmk:filter}).
\item Advance $n\leftarrow n+1$ and repeat until $t=T$.
\end{enumerate}
\end{minipage}}
\caption{The computation performed at every step. Steps 1--3 cost
$O(N^2\log N)$ (a handful of FFTs); step 4 dominates the per-step cost
(\Cref{sec:cost}), by one of the two methods described below. The very
first step ($n=0$) skips steps 1--3 ($\phi^\star:=\phi^0$, nothing to
extrapolate yet) and replaces step 4 by the first-order backward-Euler
start of \Cref{sec:scheme}, or its Richardson-extrapolated variant.}
\label{alg:step}
\end{algobox}

\paragraph{Accuracy.} The scheme's formal order follows from a standard
truncation argument, combining three separate sources of $O(\dt^2)$ error
into one. First, BDF2 itself contributes a local truncation error of
$O(\dt^2)$, by construction. Second, the extrapolation
$\phi^\star=\phi(t_{n+1})+O(\dt^2)$ (\Cref{sec:scheme}) makes every frozen
coefficient consistent to the same order, \emph{provided} $\phi^\star$
stays inside the clamp range; this proviso is verified to hold throughout
the convergence regime studied below, so the clamp safeguard never
actually bites and never degrades the order. Third, the SAV substitution
itself costs no further accuracy: $r^{n+1}\Gs=f'(\phi^{n+1})+O(\dt^2)$, by
the standard SAV consistency argument \cite{ShenXu2018}. A local
truncation error of $O(\dt^3)$, accumulated over the $O(1/\dt)$ steps of a
fixed time horizon, gives a global error of $O(\dt^2)$ overall; this is
confirmed numerically in \Cref{sec:temporal}, including when the
trajectory is started by the (only first-order-accurate) backward-Euler
step of \Cref{alg:step}. A rigorous relative-energy error estimate, in the
style of \cite{Brunk2025}, is left for future work.

\paragraph{Spatial discretisation.} Space is discretised by periodic
Fourier--Galerkin collocation on an $N\times N$ grid, without $2/3$-rule
dealiasing by default (\Cref{rmk:filter} discusses the optional filter
used for the long-time runs). This choice is exactly what
\Cref{thm:main} needs: exact discrete integration by parts and the
commutation $\grad\Dtwo=\Dtwo\grad$ both hold automatically for a
periodic Fourier discretisation, with no further assumption, since every
pointwise product entering the proof is formed directly on the
collocation grid. One technical point is worth flagging explicitly,
since it is easy to get wrong and silently breaks the identity if missed:
the discrete Laplacian must use the symbol
$-(k_x^2\mathbf1_{k_x\ne N/2}+k_y^2\mathbf1_{k_y\ne N/2})$, zeroing out
the Nyquist-frequency contribution in each direction, rather than the
naive symbol $-|k|^2$. With the naive symbol, the discrete gradient and
Laplacian become subtly inconsistent at the Nyquist mode, and
\eqref{eq:star} fails outright once the solution develops any content at
that frequency.

\paragraph{Solving the linear system.} Step 4 of \Cref{alg:step}, a
$(2N^2+1)$-dimensional linear system, is solved in one of two ways: by a
dense direct solve, exact up to machine precision and so used as a
ground-truth reference on small grids ($N\lesssim40$); or by matrix-free
GMRES (the generalised minimal residual method, an iterative solver for
linear systems that does not require storing the system matrix) with a
Fourier-diagonal preconditioner, built from the frozen coefficients
reduced to
representative scalars, used on the larger grids needed for the physics
study. The two methods agree to $10^{-12}$ wherever both are affordable.
Throughout \Cref{sec:numerics}, an independent nonlinear reference solver
is also used as a cross-check: it treats $f'(\phi^{n+1})$ fully implicitly
by Newton iteration, with no SAV device at all, on the identical spatial
discretisation, so that any disagreement with the SAV scheme isolates
exactly the effect of the SAV reformulation, and nothing else.

\section{Numerical results}\label{sec:numerics}

We verify the scheme using the configuration in \Cref{tab:coef} below
(domain $\Om=(0,1)^2$, interface parameter $\gamma=10^{-3}$), which we call
the \emph{reference configuration} and refer to by that name for the rest
of the paper. All stability, solvability and convergence claims
established above hold regardless of this choice; the reference
configuration is used purely for demonstration. 

\begin{table}[t]
\centering
\caption{The reference configuration ($\Om=(0,1)^2$, $d_0=1$, $\CO=1$
throughout), which satisfies the model's stated structural conditions
(\Cref{eq:coerc-cont}).}
\label{tab:coef}
\begin{tabular}{@{}ll@{}}
\toprule
$c(\phi)$ & $\sqrt{1.6\phi^2(1-\phi)^2+0.1}$ \\
$b(\phi)$ & $(1+\eps_1)c(\phi)^2$, $\eps_1=10^{-3}$ \\
$f(\phi)$ & $3\phi^2(1-\phi)^2$ \\
$\kappa(\phi)$ & $10^{-3}(10\phi^2+10^{-3})^{-1}$ \\
$A(\phi)$ & $\tfrac12\bigl(1+\tanh(10[\cot\pi\phi^*-\cot\pi\phi])\bigr)$, $\phi^*=0.4$ \\
$\gamma=\epsq$ & $10^{-3}$ \\
IC & $\phi_0=0.4+\xi$, $\xi\sim\text{Unif}[-0.0025,0.0025]$, $q_0=0$ \\
\bottomrule
\end{tabular}
\end{table}

\subsection{Headline demonstration: energy, mass, \texorpdfstring{$\Delta t$-robustness}{Delta t-robustness}}\label{sec:headline}

In the style of \cite{LiYang2023}, we run the coupled model from the IC
above, $N=128$, to $T=10$, at three widely separated step sizes spanning two
orders of magnitude, $\dt\in\{10^{-2},10^{-3},10^{-4}\}$ ($10^3,10^4,10^5$
steps respectively), with GMRES solved to a relative tolerance of
$10^{-10}$ at every step.

\begin{figure}[t]\centering
\includegraphics[width=0.7\textwidth]{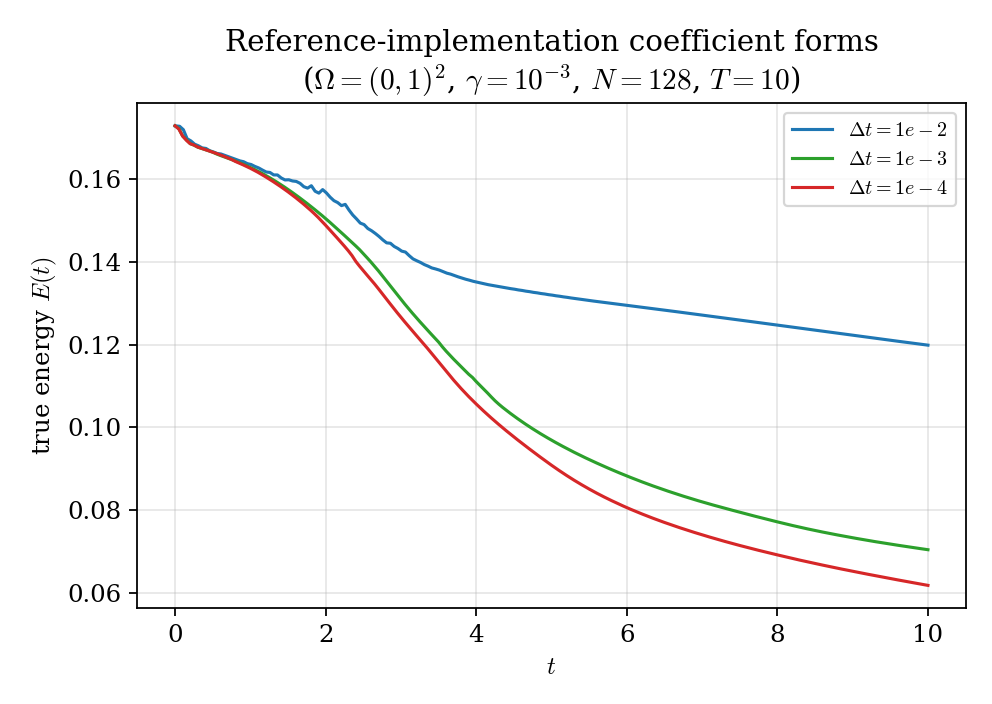}
\caption{True energy $E(t)$, all three $\dt$. The two finer $\dt$ track each
other closely to $t\approx3$ before separating; $\dt=10^{-2}$ diverges from
both much earlier and settles at a visibly higher
energy.}\label{fig:wp9-energy}
\end{figure}

\begin{figure}[t]\centering
\includegraphics[width=0.7\textwidth]{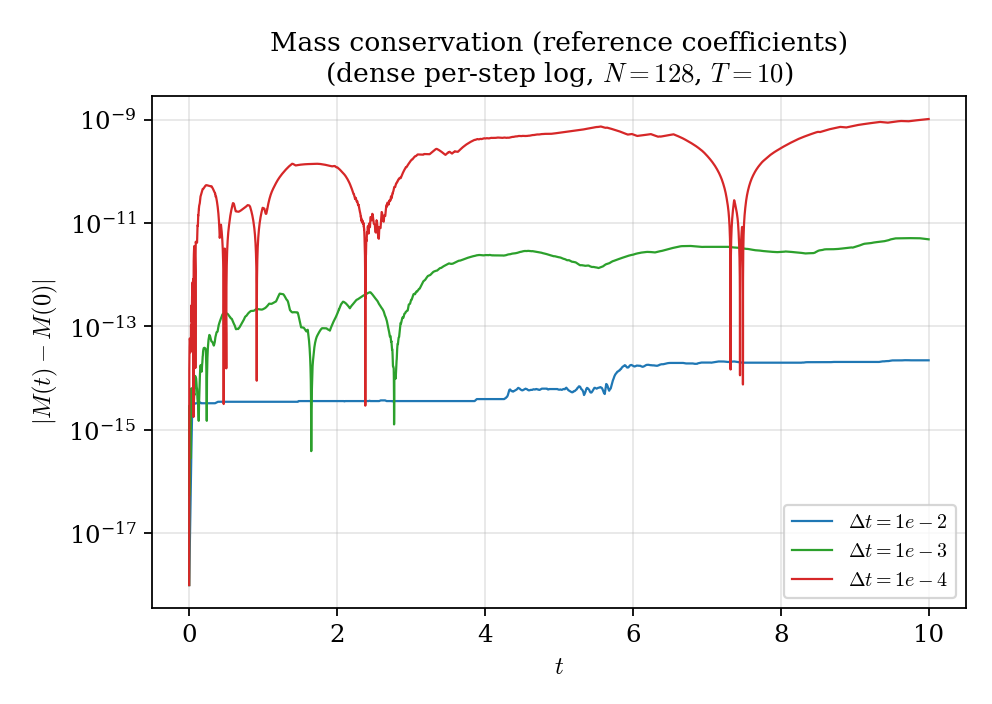}
\caption{Mass conservation $|M(t)-M(0)|$ (dense per-step log, log scale),
same three runs. All stay at or below $10^{-9}$, growing with the number of
steps taken. \Cref{prop:mass} is an identity for the linear system
\eqS \emph{as solved exactly}. Each GMRES solve leaves a residual up to the
$10^{-10}$ tolerance above, so this is the finite-precision floor on an
otherwise exact conservation law, not machine round-off, and it accumulates
mildly over the $10^3$--$10^5$ steps taken rather than staying fixed per
step.}\label{fig:wp9-mass}
\end{figure}

\begin{center}
\footnotesize
\begin{tabular}{@{}rccccc@{}}
\toprule
$\dt$ & steps & $E(10)$ & mass drift (max) & GMRES/step (med., max) & clamp frac (max) \\
\midrule
$10^{-2}$ & $10^3$  & $0.1199$ & $2.2\times10^{-14}$ & $675,\ 845$ & $0.026$ \\
$10^{-3}$ & $10^4$  & $0.0704$ & $5.1\times10^{-12}$ & $333,\ 418$ & $0.034$ \\
$10^{-4}$ & $10^5$  & $0.0618$ & $1.0\times10^{-9}$  & $84,\ 102$  & $0.012$ \\
\bottomrule
\end{tabular}
\end{center}

$\Emod^n$ decreases strictly at every step and mass is conserved to
to within the GMRES tolerance at all three $\dt$, including $\dt=10^{-2}$
(\Cref{fig:wp9-mass}), exactly as \Cref{thm:main} guarantees. But
\Cref{fig:wp9-energy} shows stability does not imply accuracy: the two finer $\dt$
agree closely up to $t\approx3$ before separating by $t=10$ ($0.0704$ vs.\
$0.0618$, a $12\%$ gap), while $\dt=10^{-2}$ diverges from both much earlier
and settles at $E(10)=0.1199$, roughly $2\times$ the finer runs' value,
consistent with its markedly higher GMRES cost (median $675$ iterations/step
vs.\ $84$ at $\dt=10^{-4}$) and clamp activity: the simple Fourier
preconditioner is a poor approximation to the true operator once $A(\phi)$'s
transition layer becomes comparably sharp to $1/N$, so this coarse a $\dt$
genuinely under-resolves the coupled dynamics, even though the discrete
energy identity itself holds throughout. Whether the residual gap between
the two finer $\dt$ is still-shrinking temporal error or reflects the
coarsening dynamics' own sensitive dependence on small perturbations once
neighbouring domains begin to interact is not resolved here.

\subsection{The energy identity, convergence, and independent solvers}

Every algebraic identity used in the proofs above, including the
sign-sensitive completing-the-square step, was additionally checked by
independent computer-algebra verification, as a safeguard against
hand-algebra slips.

We also checked numerically that the linear system \eqS is not merely
solvable in principle (\Cref{thm:solv}) but well conditioned in practice. 
To quantify this, we assembled the matrix corresponding to the system of equations \eqS 
explicitly (of size $2N^2+1$) on a small grid, $N=13$, and computed its singular values. The
matrix depends on the solution only through the frozen coefficients, which are
functions of $\phi^\star$. To avoid depending on the states visited by a
particular simulation, we drew $\phi^\star$ at random, for three independent
draws, and used step sizes $\Delta t\in\{10^{-4},10^{-2},1,10^{2}\}$. 
Across all of these cases, the smallest singular value stays at $\sigma_{\min}\gtrsim4\times10^{-4}$, 
and the condition number stays below $2\times10^6$. 
The matrix therefore is far from singular, and round-off errors are amplified at most by this factor. 

In the dense-solve runs shown in \Cref{fig:star}, the discrete energy identity \eqref{eq:star} holds to
round-off ($\sim10^-14$--$10^-17$) at every step, regardless of whether the initial data is smoothly resolved or deliberately under-resolved and noisy.
Since no GMRES tolerance floor is present, mass likewise stays at round-off.
In the under-resolved case, the clamp safeguard of \Cref{sec:scheme} is 
active at no more than $9\%$ of the spatial grid points. The coarser,
 GMRES-tolerance-limited mass conservation reported in \Cref{sec:headline} is
specific to the larger, iteratively-solved production runs, not a 
general limitation of the scheme.

\begin{figure}[t]\centering
\includegraphics[width=0.55\textwidth]{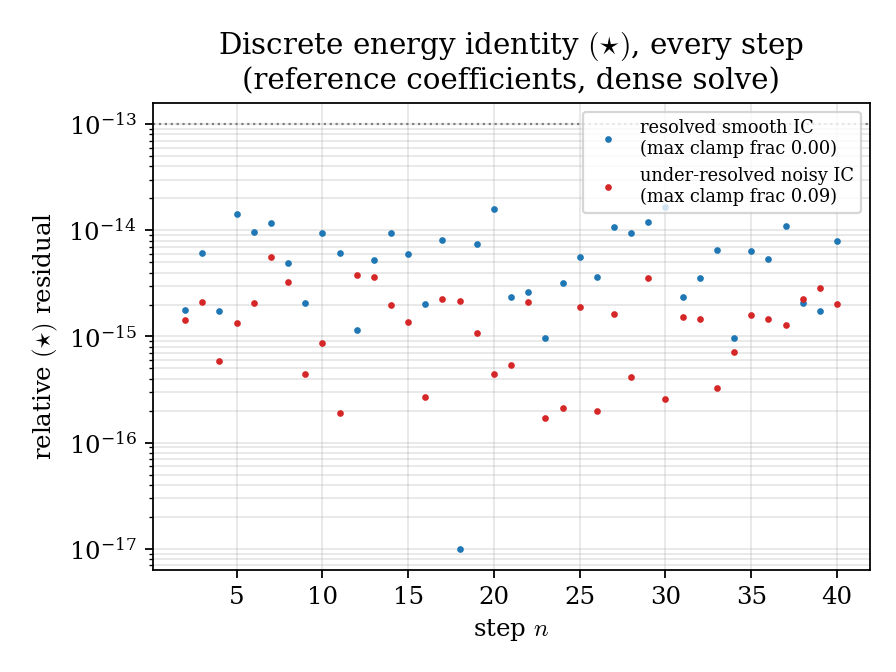}
\caption{Relative $(\star)$ residual \eqref{eq:star}, every step, dense
solve: resolved smooth IC and under-resolved noisy IC (clamp active),
reference coefficients. Both scatter at machine round-off with no
drift.}\label{fig:star}
\end{figure}

\paragraph{Temporal and spatial convergence.}\label{sec:temporal} We check
the scheme's temporal order $p_t$ (the exponent such that the error shrinks
like $\dt^{p_t}$) in two independent ways. The first is the method of
manufactured solutions (MMS) \cite{Roache2002,SalariKnupp2000}, the
standard technique for measuring a scheme's convergence order when no
closed-form solution of the original equations is available: we pick an
arbitrary smooth function of $(x,t)$,
substitute it into \eqref{eq:chq}, and add whatever extra forcing term is
needed to make that function an exact solution of the resulting, slightly
modified system (the explicit function and forcing term used here are
given in \Cref{app:mms}). Because the exact solution is then known in
closed form, we can measure the true numerical error directly at each
$\dt$ and check how fast it shrinks as $\dt\to0$. Doing this with the
reference configuration's own $b,c,A,\kappa$ forms, but with $\gamma=0.30$
in place of $10^{-3}$ and $N=40$ (both chosen large enough that the
spatial grid is not the bottleneck at any $\dt$ tested), gives
$p_t\approx2.00$ for the backward-Euler startup and, for the Richardson
startup, an order climbing from $1.68$ to
$1.95$ as $\dt\to10^{-4}$, approaching $2$ (\Cref{fig:conv}). The second
check is directly at the physical $\gamma=10^{-3}$ ($N=160$, tiny
perturbation IC), where no manufactured solution is used; instead we
estimate the order by
self-convergence, comparing the numerical solutions obtained at
successively halved $\dt$ against one another rather than against a known
exact solution, which gives $p_t=1.98,2.00$. For the spatial
discretisation, the Fourier--Galerkin method is expected to converge
spectrally on smooth data, meaning the error should fall off faster than
any fixed power of $1/N$ as $N$ increases: starting from a smooth but
sharply peaked (Lorentzian-shaped) initial condition and measuring the
error relative to a highly resolved reference run at $N=160$, the error
drops by a factor of $10^7$ as $N$ increases from $16$ to $96$, confirming
this spectral rate.

\begin{figure}[t]\centering
\includegraphics[width=0.55\textwidth]{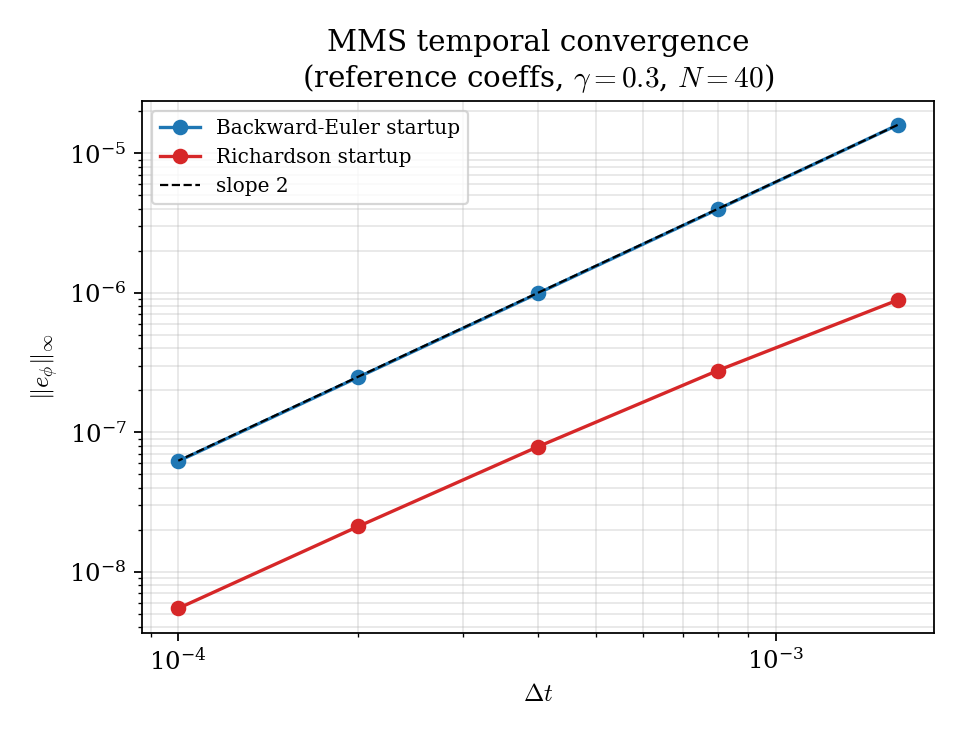}
\caption{Temporal convergence order measured by the method of manufactured
solutions (MMS), reference coefficients, both startups, against a
slope-$2$ reference.}\label{fig:conv}
\end{figure}

\paragraph{Agreement with an independent discretisation.} The two checks
above test the scheme's time-stepping order against itself or against a
forcing term built with the same code; a bug shared between the solver
and its own forcing-term construction would not be caught by either. As a
check on the underlying right-hand side itself, we therefore also compare
against an independently implemented nonlinear Newton--BDF2 solver that
treats $f'(\phi^{n+1})$ fully implicitly, with no SAV device at all. We run
both schemes from the same initial condition on a generic,
well-conditioned coefficient set,
\begin{gather*}
c(\phi)=0.4+0.2\cos\phi,\qquad
b(\phi)=\frac{c(\phi)^2}{1.2}+0.2,\qquad
A(\phi)=0.5+0.3\sin\phi, \\
\kappa(\phi)=0.6+0.3\phi^2,\qquad
f(\phi)=(\phi-0.3)^2(\phi-0.7)^2,
\end{gather*}
with $d_0=1.2$, $\gamma=0.3$, $\epsq=0.15$ (deliberately distinct from the
reference configuration of \Cref{tab:coef}, and free of its steep
$A(\phi)$ transition, to test that agreement is not special to that one
coefficient choice), on a $19\times19$ grid to $T=0.12$, at five step
sizes $\dt\in\{8,4,2,1,0.5\}\times10^{-3}$. The max-norm difference
between the two schemes' $\phi$ fields shrinks consistently at order
$2.04,2.02,2.00,2.01$ across all four successive halvings
(\Cref{fig:indep}), and the difference between their true energies shrinks
at a matching $O(\dt^2)$ rate, exactly as expected since both schemes are
independently second-order accurate approximations of the same dynamics,
so their pairwise difference should vanish at this same rate.

\begin{figure}[t]\centering
\includegraphics[width=0.55\textwidth]{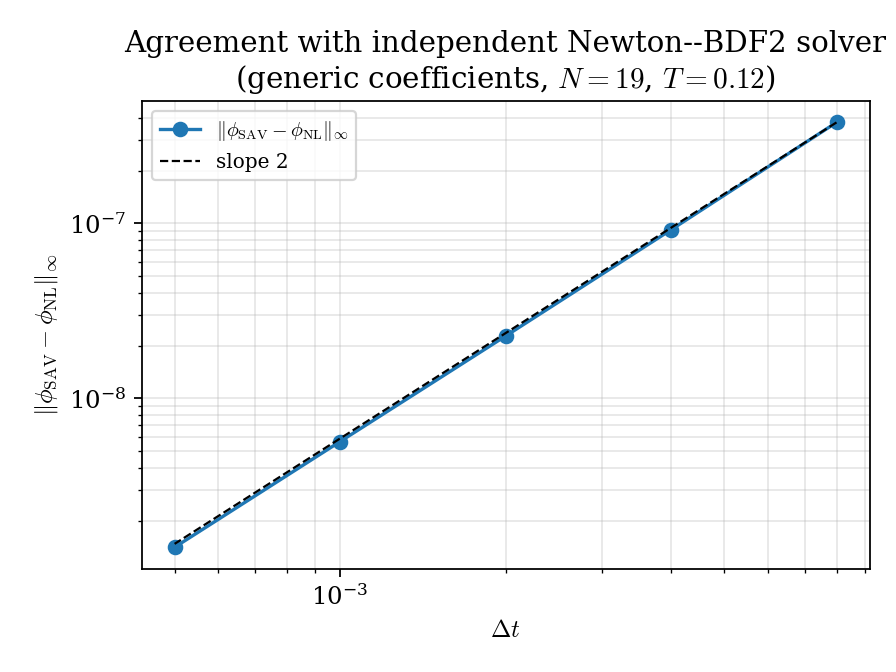}
\caption{Agreement with the independent nonlinear Newton--BDF2 solver,
generic coefficient set, $19\times19$ grid, $T=0.12$:
$\|\phi_{\mathrm{SAV}}-\phi_{\mathrm{NL}}\|_\infty$ against a slope-$2$
reference, all five $\dt$.}\label{fig:indep}
\end{figure}

\paragraph{Cost against the nonlinear scheme.}\label{sec:cost} At $\gamma=0.20$ (linearly
stable, so both schemes compute the same solution), the SAV scheme uses
$2.5$--$3.0\times$ fewer linear iterations and $1.7$--$2.0\times$ less wall
time than Newton--BDF2 at $N=64$--$128$, at substantially less memory (no
Jacobian storage). The underlying reason for this advantage is structural,
not just an empirical count: the linear SAV system is unconditionally
solvable at each step (\Cref{thm:solv}), whereas Newton additionally
requires its nonlinear iteration to actually converge; in the linearly
stable regime tested here this requirement costs little in practice
(Newton converges in 2--3 iterations even at $\dt\sim0.1$), but it is a
requirement the SAV scheme simply does not have. The simple Fourier
preconditioner does degrade at very large $\dt$ and at a steep $A(\phi)$.
The headline demonstration and physics study below, both run at this
steeper coefficient set, show this cost directly rather than avoiding it:
up to $845$ GMRES iterations per step at the coarsest $\dt$ tested
(\Cref{sec:headline}). This is a
preconditioner limitation (\eqref{eq:star} stays at round-off throughout),
underscoring that a multigrid preconditioner for this regime is a genuine
priority, not merely a theoretical concern, for future work.

\paragraph{Ablations.} Two parameters entering the scheme are, by
construction, not supposed to affect the computed solution, which we check here directly. First, \Cref{thm:main} only
requires $\epsq\ge0$, so the extra stress-smoothing term $\epsq\lap q$ is
not needed for stability. Sweeping $\epsq$ from $10^{-3}$ down to $0$
confirms that \eqref{eq:star} continues to hold and $\Emod$ remains
monotone at every value, including $\epsq=0$, with the computed solution
itself moving by only $4\times10^{-6}$. Second, the SAV constant $\CO$
(\Cref{sec:cont-energy}) is a free modelling choice introduced purely to
keep the square root well defined, and should not change the physical
solution: varying $\CO$ over five orders of magnitude changes $\phi,q$ by
at most $8\times10^{-11}$, confirming that it does not.

\subsection{Coupled-vs-uncoupled demonstration}\label{sec:physics}

Using the configuration above ($N=128$, $\dt=10^{-3}$, dealiased, identical
seeded IC), we run the coupled model and its $A\equiv0$ control (which
decouples $q$ and reduces \eqref{eq:chq-phi} to plain Cahn--Hilliard with the
same $b(\phi)$) to $T=10$.

\begin{figure}[t]\centering
\includegraphics[width=\textwidth]{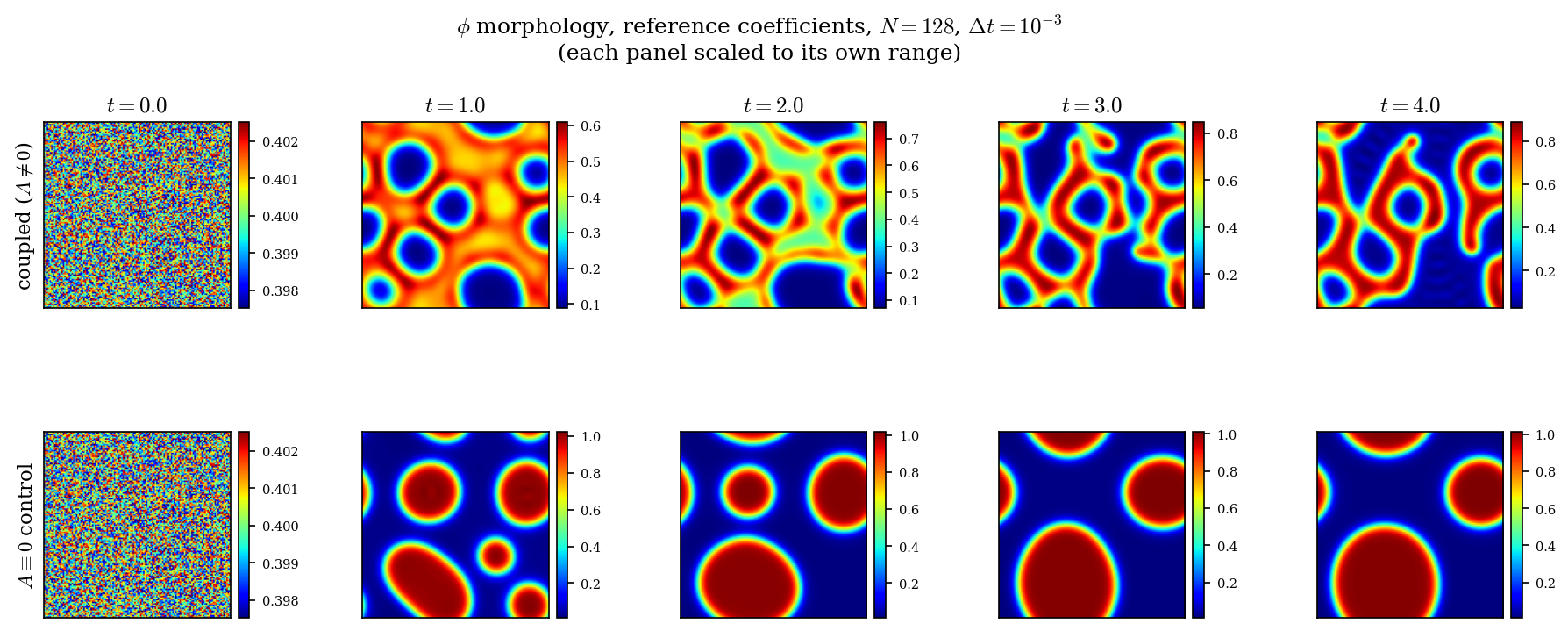}
\caption{$\phi$ morphology, reference coefficients, $N=128$,
$\dt=10^{-3}$, dealiased, at $t=0,1,2,3,4$ (the early, most rapidly evolving
part of the $T=10$ run); \emph{each panel is scaled to its own $\phi$ range}
(colour bars shown) so structure is visible at every time, including the
tiny initial perturbation at $t=0$. Top: coupled model. Bottom: $A\equiv0$
control (Cahn-Hilliard dynamics). The control already shows large, fully-saturated droplets by $t=1$
and coarsens by Ostwald ripening from there (plateauing by $t\approx7$--$8$,
beyond this figure's range); the coupled model instead organises much more
slowly into an interconnected network that stays at intermediate $\phi$
(never reaching the well minima) and is still visibly coarsening at $t=10$;
the two models are already qualitatively different within the first time
unit.}\label{fig:chqpy-morph}
\end{figure}

\begin{figure}[t]\centering
\includegraphics[width=0.7\textwidth]{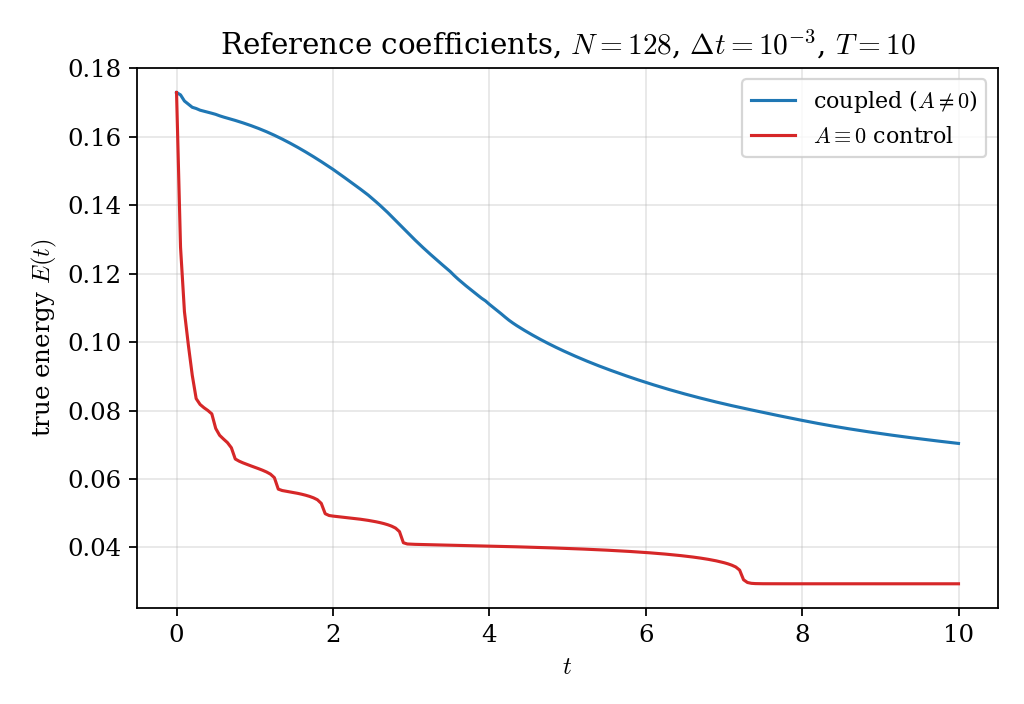}
\caption{True energy $E(t)$, same two runs. The $A\equiv0$ control falls
faster and plateaus by $t\approx7$--$8$ (visible step-drops from individual
droplet coalescence events); the coupled model keeps declining smoothly
throughout, not yet reaching equilibrium at $t=10$.}\label{fig:chqpy-energy}
\end{figure}

Mass is conserved to $\sim5\times10^{-12}$ in both runs and $\Emod$ decreases
strictly every step. The two models behave very differently
(\Cref{fig:chqpy-morph} and \Cref{fig:chqpy-energy}): from the identical
$E(0)=0.1729$, the $A\equiv0$ control has released $83\%$ of its initial
energy by $t=10$ and reached a coarsening plateau, while the coupled model
has released only $59\%$ and is still on a visibly downward slope, not yet
at equilibrium. The GMRES cost mirrors the morphology: the coupled model
needs a median $333$ iterations/step (the steep $A(\phi)$ term is genuinely
expensive to precondition, \Cref{sec:cost}), while the $A\equiv0$ control,
with no coupling term to resolve, needs only $16$.

\emph{Robustness}: the scheme remains stable over $10^4$ steps on
the reference configuration of the \Cref{tab:coef}, and the viscoelastic coupling
measurably slows and delays phase separation relative to an uncoupled
control from the identical initial perturbation, consistent with Tanaka's
viscoelastic suppression of phase separation \cite{Tanaka2000}. The
results above use a single realisation per case, at $\dt=10^{-3}$
specifically, given the cost of finer $\dt$ documented in
\Cref{sec:headline}. A systematic study of these dependencies is follow-on
work that this method was built to make affordable.

\section{Conclusion}

We have derived and verified a linear, second-order, unconditionally
energy-stable SAV--BDF2 scheme for the CHQ viscoelastic phase-separation
model. Stability rests on an exact discrete energy--dissipation identity
requiring only the model's own coercivity assumption, frozen at the
extrapolated state, with no time-step restriction. The energy--dissipation identity also holds for the post-step spectral-filtering variant used in the physics study of \Cref{sec:physics}, establishing the stability of the numerical method used to obtain these results. The per-step linear system is uniquely solvable for every $\dt>0$.
Numerically, the energy--dissipation identity holds to round-off at every time step, even for $\epsq=0$ and across five orders of magnitude in $C_0$. The method achieves second-order temporal convergence, including for the thin interface $\gamma=10^{-3}$, and spectral accuracy in space for the smooth test problems considered. Compared with an independent nonlinear Newton--BDF2 solver, it requires $2.5$--$3.0\times$ fewer linear iterations and $1.7$--$2.0\times$ less wall time. Moreover, it provides a per-step solvability guarantee that the nonlinear scheme lacks. On
the reference configuration of \Cref{sec:numerics}, run to $10^4$
steps from an identical initial perturbation, the method reproduces the
qualitative viscoelastic suppression of phase separation reported in the
literature: the coupled system forms a persistent, still-evolving gel-like
network that never fully separates into the pure phases, while an
uncoupled ($A\equiv0$) control instead forms discrete droplets that
coarsen to a fully phase-separated state. A
methodological note is itself relevant here: the scheme's un-dealiased
spectral discretisation can develop grid-scale aliasing noise from
evaluating nonlinear coefficients on the collocation grid; an optional
post-hoc spectral filter (provably stability-preserving, \Cref{rmk:filter})
removes it and is used throughout. A multigrid preconditioner for the local
$q$-block operator (whose cost this steeper coefficient set makes
concretely visible rather than hypothetical) and a rigorous
relative-energy error estimate in the style of \cite{Brunk2025} are left
for future work.

\section*{Declarations}

\paragraph{Acknowledgements.} We thank  M\'aria Luk\'acov\'a-Medvidov\'a
and  Aaron Brunk for helpful discussions.

\paragraph{Funding.} This work is funded by the Deutsche Forschungsgemeinschaft
(DFG, German Research Foundation) -- SFB 1551, Project-ID 464588647.

\paragraph{Competing interests.} The authors declare no competing interests.

\paragraph{Data and code availability.} The solver, verification scripts and
analysis code underlying every result in this paper are available from the
authors upon reasonable request. 

\paragraph{Use of AI-assisted technologies.} During the preparation of this work, the authors used ChatGPT and Claude to support code development, assist in formulating mathematical proofs, explore the relevant literature, and improve the clarity and readability of the manuscript. The authors critically reviewed the AI-assisted contributions and independently verified the mathematical arguments, proofs, and numerical results reported in this work. The authors take full responsibility for the accuracy and integrity of the content of this publication.

\appendix
\section{Proofs and derivations}\label{app:derivations}

This appendix gives a complete and self-contained details of the proof
and derivation that the main text presents only in brief: the exact
equivalence of the SAV reformulation, the proof of \Cref{prop:cont-energy},
the G-stability identity used in \Cref{thm:main}, the complete proof of
\Cref{thm:main} itself, the filtered-scheme stability extension of
\Cref{rmk:filter}, and the complete proof of \Cref{thm:solv}.

Almost every step below uses one
recurring trick, standard in the weak or variational
treatment of differential equations: rather than solving an equation like
$\pt\phi=(\text{some expression})$ pointwise, we multiply both sides by a
chosen function $g$ (a test function) and integrate over $\Om$,
writing the result as the inner product
$\inner{\pt\phi}{g}\eqdef\int_\Om(\pt\phi)\,g\,\dx$ (\Cref{sec:notation}).
Below, ``testing equation~(X) with $g$'' always means exactly this: multiply
equation~(X) by $g$ and integrate over $\Om$. Choosing $g$ well (often $g=\mu$, $g=q$, or the constant $g=1$) lets a
derivative move off the unknown and onto $g$ by integration by parts, with
no leftover boundary term, because $\Om$ is periodic and has no
boundary at all: these are identities \eqref{eq:ibp1} and \eqref{eq:ibp2}
of \Cref{sec:notation}. We prove them here, since they are used in
essentially every computation below. For any scalar field $g$ and vector
field $F$, the product rule for the divergence gives
\[
\dv(gF) = (\grad g)\cdot F + g\,(\dv F) ,
\qquad\text{i.e.}\qquad
g\,(\dv F) = \dv(gF) - (\grad g)\cdot F .
\]
Integrating both sides over $\Om$,
\[
\int_\Om g\,(\dv F)\,\dx = \int_\Om \dv(gF)\,\dx - \int_\Om(\grad g)\cdot F\,\dx .
\]

By the divergence theorem,
\(
\int_\Om \dv(gF),\dx
=\oint_{\partial\Om}(gF)\cdot n,\mathrm dS,
\)
where \(n\) denotes the outward unit normal to \(\partial\Om\). For a periodic domain \(\Om\), the boundary integral vanishes because opposite faces are identified. By periodicity, the values of \(g\) and \(F\) agree at corresponding points on opposite faces. However, the outward unit normals at these points have opposite directions. Consequently, the corresponding flux contributions cancel pairwise, yielding
\(
\oint_{\partial\Om}(gF)\cdot n,\mathrm dS=0.
\)
Thus, the integral of \(\dv(gF)\) over \(\Om\) vanishes for any periodic fields \(g\) and \(F\). Hence
\[
\int_\Om g\,(\dv F)\,\dx = -\int_\Om(\grad g)\cdot F\,\dx ,
\qquad\text{i.e.}\qquad
\inner{\dv F}{g}=-\inner{F}{\grad g} ,
\]
which is exactly \eqref{eq:ibp1}. Identity \eqref{eq:ibp2} follows at once
by taking $F=\grad u$ for a scalar field $u$, since then
$\dv F=\dv(\grad u)=\lap u$:
\[
\inner{\lap u}{g}=\inner{\dv(\grad u)}{g}=-\inner{\grad u}{\grad g} .
\]
Both are repeated here for convenience:
\[
\inner{\dv F}{g}=-\inner{F}{\grad g},
\qquad
\inner{\lap u}{g}=-\inner{\grad u}{\grad g} .
\]

A second recurring convention concerns the star subscript $(\bs,\cs,\As,\ks,\Gs)$, which indicates that a coefficient is evaluated at the extrapolated, already-known state $\phi^\star$ or its clamped version $\phi^\star_b$ (see \Cref{sec:scheme}). These coefficients are independent of the unknown $\phi^{n+1}$ and are therefore fixed and known when the equations are solved. This property ensures that the scheme is linear in $(\phi^{n+1},\mu^{n+1},q^{n+1},r^{n+1})$. It also allows these coefficients to be moved in and out of inner products as appropriate. Finally, operators $\Dtwo$ and $\dd$ (see \eqref{eq:bdf2-ops}) are time-difference operators involving three consecutive time levels of a quantity, rather than spatial derivatives. Finally, $\Phimix(u,v)\eqdef\norm u^2+\norm{2u-v}^2$ is a specific combination of two time levels, introduced because it yields a telescoping identity under the BDF2 recursion, as shown in \eqref{eq:G} below. It has no independent significance beyond this role.

\paragraph{Exact equivalence of the SAV reformulation.} The
SAV trick replaces the nonlinear term $f'(\phi)$ in \eqref{eq:chq-mu} by
the product $r\,G(\phi)$ of a new unknown $r(t)$ and a known function $G$,
and supplies $r$ with its own evolution equation. This is only useful if
the enlarged system has exactly the same solutions as the original
one. We show this
by proving that, along any solution of the enlarged system, $r(t)$ is
forced to always equal $\sqrt{\Ef(\phi(t))+\CO}$, which is exactly the
value that makes $r\,G(\phi)$ equal $f'(\phi)$.

With $r(t)\eqdef\sqrt{\Ef(\phi(t))+\CO}$ and $G(\phi)\eqdef f'(\phi)/\sqrt{\Ef(\phi)+\CO}$ (so, by construction,
$r(t)G(\phi(t))=f'(\phi(t))$ whenever $r(t)$ happens to take this
particular value), the SAV system replaces \eqref{eq:chq-mu} by
$\mu=-\gamma\lap\phi+rG(\phi)$, where $r$ is now an independent unknown,
and appends the equation $\dot r=\half\inner{G(\phi)}{\pt\phi}$ to
determine it. We must check that this new equation for $r$ is actually
consistent with the formula that motivated it. By the chain rule,
\[
\frac{\mathrm d}{\mathrm dt}\sqrt{\Ef(\phi(t))+\CO}
=\frac{1}{2\sqrt{\Ef(\phi(t))+\CO}}\inner{f'(\phi)}{\pt\phi} .
\]
If, at some instant, $r=\sqrt{\Ef(\phi)+\CO}$ already holds, then also
$f'(\phi)=rG(\phi)$ at that instant (by definition of $G$), so the
right-hand side above becomes exactly $\half\inner{G(\phi)}{\pt\phi}$,
precisely the right-hand side of the SAV system's own equation for $\dot
r$. In other words: the function $t\mapsto\sqrt{\Ef(\phi(t))+\CO}$ and any
solution $r(t)$ of the SAV system satisfy the same differential
equation for $\dot r$ (with $\phi(t)$ regarded as already determined), and
they start from the same value at $t=0$, since $r(0)$ is initialised as
$\sqrt{\Ef(\phi(0))+\CO}$.

Two functions of $t$ with the same derivative at every instant and the
same value at one instant must be equal for all time: their difference has
derivative zero everywhere and value zero at $t=0$, so by the fundamental
theorem of calculus the difference is zero everywhere. Hence
$r(t)\equiv\sqrt{\Ef(\phi(t))+\CO}$ along any solution of the SAV system,
and therefore $r(t)G(\phi(t))\equiv f'(\phi(t))$ for all $t$: the SAV
$\mu$-equation reduces identically to the original \eqref{eq:chq-mu}, so
every solution of the SAV system solves \eqref{eq:chq}. Conversely, given
any solution of \eqref{eq:chq}, defining $r(t)\eqdef\sqrt{\Ef(\phi(t))+\CO}$
satisfies the SAV system's $r$-equation automatically, by the same
chain-rule computation run in reverse: the two systems have exactly the
same solutions.

\paragraph{Proof of \Cref{prop:cont-energy}.} We compute
$\frac{\mathrm d}{\mathrm dt}E$ directly from the definition of $E$
\eqref{eq:energy}, substitute the two evolution equations
\eqref{eq:chq-phi} and \eqref{eq:chq-q} to eliminate $\pt\phi,\pt q$, and
then complete the square in the resulting cross term to reveal a
manifestly non-negative quantity. 

\emph{Step 1: Differentiate $E$ term by term.}

Recall that
$E=\int_\Om\bigl[\tfrac\gamma2|\grad\phi|^2+f(\phi)+\tfrac12q^2\bigr]\dx$.
We differentiate each term in the integrand with respect to time. For smooth solutions, differentiation and integration over the fixed domain $\Om$ commute. For the first term, we use $|\grad\phi|^2=\grad\phi\cdot\grad\phi$. The product rule then gives
$\tfrac{\mathrm d}{\mathrm dt}|\grad\phi|^2=2,\grad\phi\cdot\grad\pt\phi$,
where we have used the commutativity of time and spatial derivatives, $\pt(\grad\phi)=\grad(\pt\phi)$. Multiplication by the prefactor $\tfrac\gamma2$ yields $\gamma,\grad\phi\cdot\grad\pt\phi$. For the second term, the chain rule gives $\tfrac{\mathrm d}{\mathrm dt}f(\phi)=f'(\phi),\pt\phi$. For the third term, we obtain $\tfrac{\mathrm d}{\mathrm dt}\bigl(\tfrac12q^2\bigr)=q \pt q$. Integrating these pointwise derivatives over $\Om$ and expressing the resulting integrals as inner products,

\begin{equation}
\label{eq:derivative_E}
\frac{\mathrm d}{\mathrm dt}E
=\gamma\inner{\grad\phi}{\grad\pt\phi}+\inner{f'(\phi)}{\pt\phi}+\inner{q}{\pt q} .
\end{equation}

\emph{Step 2: Combine the first two terms into $\inner{\mu}{\pt\phi}$.}
Identity \eqref{eq:ibp2}, $\inner{\lap u}{g}=-\inner{\grad u}{\grad g}$, rearranges
(swap the two sides and negate) to $\inner{\grad u}{\grad g}=-\inner{\lap u}{g}$ and
applying this with $u=\phi$, $g=\pt\phi$, we get
$\inner{\grad\phi}{\grad\pt\phi}=-\inner{\lap\phi}{\pt\phi}$. Multiplying by $\gamma$ and using the bilinearity of the inner product, we obtain
$\gamma\inner{\grad\phi}{\grad\pt\phi}=-\gamma\inner{\lap\phi}{\pt\phi}=\inner{-\gamma\lap\phi}{\pt\phi}$. Adding the second term in \eqref{eq:derivative_E} and
using the additivity of the inner product, $\inner{a}{c}+\inner{b}{c}=\inner{a+b}{c}$ with
$a=-\gamma\lap\phi$, $b=f'(\phi)$, $c=\pt\phi$, we obtain
\[
\gamma\inner{\grad\phi}{\grad\pt\phi}+\inner{f'(\phi)}{\pt\phi}
=\inner{-\gamma\lap\phi+f'(\phi)}{\pt\phi} .
\]
By \eqref{eq:chq-mu}, $-\gamma\lap\phi+f'(\phi)=\mu$, so this
equals $\inner{\mu}{\pt\phi}$, and Step 1 becomes
\[
\frac{\mathrm d}{\mathrm dt}E=\inner{\mu}{\pt\phi}+\inner{q}{\pt q} .
\]

\emph{Step 3: Substitute $\pt\phi$ from \eqref{eq:chq-phi}.} Equation
\eqref{eq:chq-phi} says $\pt\phi=\dv(b\grad\mu-c\grad(Aq))$. Substituting
this expression for $\pt\phi$ directly into $\inner{\mu}{\pt\phi}$, we get
\[
\inner{\mu}{\pt\phi}=\inner{\mu}{\dv(b\grad\mu-c\grad(Aq))} .
\]
By the symmetry of the inner product, $\inner{\mu}{\dv F}=\inner{\dv F}{\mu}$
for any flux $F$ and applying \eqref{eq:ibp1}, $\inner{\dv F}{g}=-\inner{F}{\grad g}$,
with $F=b\grad\mu-c\grad(Aq)$ and $g=\mu$, we obtain
\[
\inner{\mu}{\pt\phi}=-\inner{b\grad\mu-c\grad(Aq)}{\grad\mu} .
\]

By the bilinearity of the inner product, we can expand the flux term as
$\inner{b\grad\mu-c\grad(Aq)}{\grad\mu}
=\inner{b\grad\mu}{\grad\mu}-\inner{c\grad(Aq)}{\grad\mu}$.
Consequently,
\[
\inner{\mu}{\pt\phi}=-\inner{b\grad\mu}{\grad\mu}+\inner{c\grad(Aq)}{\grad\mu}
=-\int_\Om b|\grad\mu|^2+\int_\Om c\,\grad\mu\cdot\grad(Aq) ,
\]

using $\inner{b\grad\mu}{\grad\mu}=\int_\Om b\,\grad\mu\cdot\grad\mu
=\int_\Om b|\grad\mu|^2$ and the commutativity of the dot product,
$\grad(Aq)\cdot\grad\mu=\grad\mu\cdot\grad(Aq)$, inside the second
integral.

\emph{Step 4: Substitute $\pt q$ from \eqref{eq:chq-q}.} Equation
\eqref{eq:chq-q} says
$\pt q=-\kappa q-A\,\dv F+\epsq\lap q$, with the flux $F\eqdef c\grad\mu-d_0\grad(Aq)$. Substituting into $\inner{q}{\pt q}$ (testing
\eqref{eq:chq-q} with $g=q$) and distributing the inner product over the
three terms of the sum (bilinearity in the second slot), we get
\[
\inner{q}{\pt q}=\inner{q}{-\kappa q}+\inner{q}{-A\,\dv F}+\inner{q}{\epsq\lap q}
=-\inner{\kappa q}{q}-\inner{A\,\dv F}{q}+\epsq\inner{\lap q}{q} .
\]
We can now consider the three terms on the right-hand side separately:
\begin{itemize}
\item The first is already a plain integral:
$-\inner{\kappa q}{q}=-\int_\Om\kappa q^2$.
\item The third, by \eqref{eq:ibp2} with $u=q,g=q$: $\inner{\lap q}{q}=-\inner{\grad q}{\grad q}=-\norm{\grad q}^2$,
so $\epsq\inner{\lap q}{q}=-\epsq\norm{\grad q}^2$.
\item The second needs the same manoeuvre used repeatedly in
\Cref{sec:accuracy}--\Cref{sec:numerics}. At this point in the argument,
$A=A(\phi)$ is simply a known scalar field: it depends on neither $\mu$
nor the unknown being tested against. So the product $Aq$ is itself a
perfectly good test function.

Concretely,
\[
\inner{A\,\dv F}{q}=\int_\Om A\,(\dv F)\,q\,\dx
=\int_\Om(\dv F)\,(Aq)\,\dx=\inner{\dv F}{Aq} ,
\]
since the three scalar factors $A,\dv F,q$ can be regrouped freely under
the integral: ordinary multiplication of numbers is commutative and
associative. Applying \eqref{eq:ibp1} with $g=Aq$ gives
$\inner{\dv F}{Aq}=-\inner{F}{\grad(Aq)}$, so
$\inner{A\,\dv F}{q}=-\inner{F}{\grad(Aq)}$.

Substituting $F=c\grad\mu-d_0\grad(Aq)$ and distributing the inner product
over its two terms,
\begin{align*}
-\inner{A\,\dv F}{q}
&=\inner{F}{\grad(Aq)}
=\inner{c\grad\mu-d_0\grad(Aq)}{\grad(Aq)}\\
&=\inner{c\grad\mu}{\grad(Aq)}-d_0\inner{\grad(Aq)}{\grad(Aq)}
=\int_\Om c\,\grad\mu\cdot\grad(Aq)-d_0\norm{\grad(Aq)}^2 .
\end{align*}

\end{itemize}
Collecting the three pieces, we find
\[
\inner{q}{\pt q}=-\int_\Om\kappa q^2+\int_\Om c\,\grad\mu\cdot\grad(Aq)
-d_0\int_\Om|\grad(Aq)|^2-\epsq\norm{\grad q}^2 .
\]

\emph{Step 5: Add Steps 3 and 4.} Writing the two results one under the
other,
\[
\begin{aligned}
\inner{\mu}{\pt\phi}&=-\int_\Om b|\grad\mu|^2+\int_\Om c\,\grad\mu\cdot\grad(Aq) ,\\
\inner{q}{\pt q}&=\phantom{-\int_\Om b|\grad\mu|^2+{}}\int_\Om c\,\grad\mu\cdot\grad(Aq)
-d_0\int_\Om|\grad(Aq)|^2-\int_\Om\kappa q^2-\epsq\norm{\grad q}^2 ,
\end{aligned}
\]
and adding column by column, we obtain
\[
\frac{\mathrm d}{\mathrm dt}E=\inner{\mu}{\pt\phi}+\inner{q}{\pt q}
=-\int_\Om b|\grad\mu|^2+2\int_\Om c\,\grad\mu\cdot\grad(Aq)
-d_0\int_\Om|\grad(Aq)|^2-\int_\Om\kappa q^2-\epsq\norm{\grad q}^2 .
\]
This is the point where the sign of the coupling term in
\eqref{eq:chq-phi} matters: it is chosen precisely so that testing both
equations produces the same cross term twice, rather than two cross
terms that would instead cancel each other (in which case the calculation
below would not produce a sign-definite dissipation at all, and the
coercivity condition \eqref{eq:coerc-cont} discussed in
\Cref{sec:cont-energy} above would not arise).

\emph{Step 6: complete the square.} The first three terms on the right
have the pointwise form $-b|u|^2+2c(u\cdot w)-d_0|w|^2$ with $u=\grad\mu$,
$w=\grad(Aq)$, exactly the configuration of identity \eqref{eq:CS},
stated above in the proof of \Cref{prop:cont-energy}. The algebraic identity \eqref{eq:CS} 

holds because expanding its right-hand side gives
$\bigl(b-\tfrac{c^2}{d_0}\bigr)|u|^2
+\tfrac1{d_0}\bigl(c^2|u|^2-2cd_0(u\cdot w)+d_0^2|w|^2\bigr)
=b|u|^2-\tfrac{c^2}{d_0}|u|^2+\tfrac{c^2}{d_0}|u|^2-2c(u\cdot w)+d_0|w|^2
=b|u|^2-2c(u\cdot w)+d_0|w|^2$, since the two $\tfrac{c^2}{d_0}|u|^2$ terms
(one negative, one positive) cancel exactly. Negating both sides of this
identity converts $-b|u|^2+2c(u\cdot w)-d_0|w|^2$ pointwise, at every
$x\in\Om$, into $-\bigl(b-\tfrac{c^2}{d_0}\bigr)|u|^2-\tfrac1{d_0}|cu-d_0w|^2$.

\emph{Step 7: assemble $\Dcont$.} Substituting this pointwise rewriting
into the first three terms of the result of Step 5, and integrating over
$\Om$, gives
\[
\frac{\mathrm d}{\mathrm dt}E
=-\int_\Om\Bigl[\Bigl(b-\tfrac{c^2}{d_0}\Bigr)|\grad\mu|^2
+\tfrac1{d_0}|c\grad\mu-d_0\grad(Aq)|^2\Bigr]\dx
-\int_\Om\kappa q^2-\epsq\norm{\grad q}^2 .
\]
Collecting all four terms under one integral, with an overall minus sign
in front, reproduces exactly $\frac{\mathrm d}{\mathrm dt}E=-\Dcont$, with
$\Dcont$ as defined in \eqref{eq:cont-D}. Term by term, the four summands
match: $\bigl(b-\tfrac{c^2}{d_0}\bigr)|\grad\mu|^2$,
$\tfrac1{d_0}|c\grad\mu-d_0\grad(Aq)|^2$, $\kappa q^2$, and
$\epsq|\grad q|^2$ are precisely the four terms inside the integral
defining $\Dcont$.

\paragraph{Proof of the G-stability identity \eqref{eq:G}.} This identity is not new. It is the Nevanlinna--Odeh multiplier for BDF2
\cite{NevanlinnaOdeh1981}, the standard tool for proving G-stability
\cite{Dahlquist1976} of multistep methods. We include its proof because it
is used repeatedly above. The identity is purely algebraic: it holds for
any three elements $x,y,z$ of the same space and does not use the model
equations. Its role is to rewrite a term of the form
$\inner{3x-4y+z}{x}$, which arises whenever a BDF2 formula is tested
against its newest level, as a telescoping difference of $\Phimix$ plus a
non-negative remainder. This telescoping turns the step-by-step
computation into a statement that the energy decreases monotonically. The
identity is verified by expanding both sides into inner products and
comparing coefficients.

Let $x=a^{n+1}$, $y=a^n$, $z=a^{n-1}$, where $a$ denotes any of the
quantities $\grad\phi$, $q$, $r$. Both sides of \eqref{eq:G} can be written
in terms of the six quantities
$\norm x^2,\inner xy,\norm y^2,\inner yz,\norm z^2,\inner xz$.
Using $\norm{u-v}^2=\norm u^2-2\inner uv+\norm v^2$, we obtain
\begin{align*}
\norm{2x-y}^2&=4\norm x^2-4\inner xy+\norm y^2 ,\\
\norm{2y-z}^2&=4\norm y^2-4\inner yz+\norm z^2 ,\\
\norm{x-2y+z}^2&=\norm x^2+4\norm y^2+\norm z^2-4\inner xy+2\inner xz-4\inner yz .
\end{align*}
The last line follows by expanding $\inner{x-2y+z}{x-2y+z}$: the squared
terms give $\norm x^2+4\norm y^2+\norm z^2$, and the cross terms give
$2\inner{x}{-2y}+2\inner{x}{z}+2\inner{-2y}{z}
=-4\inner xy+2\inner xz-4\inner yz$. Inserting these expressions into the
right-hand side of \eqref{eq:G} yields
\begin{align*}
\half\bigl[\Phimix(x,y)-\Phimix(y,z)\bigr]+\half\norm{x-2y+z}^2
&=\half\bigl[(\norm x^2+\norm{2x-y}^2)-(\norm y^2+\norm{2y-z}^2)\bigr]\\
&\quad+\half\norm{x-2y+z}^2\\
&=\half\bigl[5\norm x^2-4\inner xy-4\norm y^2+4\inner yz-\norm z^2\bigr]\\
&+\half\bigl[\norm x^2+4\norm y^2+\norm z^2-4\inner xy+2\inner xz-4\inner yz\bigr]\\
&=3\norm x^2-4\inner xy+\inner xz .
\end{align*}

In the last step, all terms involving $\norm y^2$, $\inner yz$ and
$\norm z^2$ cancel between the two brackets. On the other hand, by the
symmetry of the inner product,
\[
\inner{3x-4y+z}{x}=3\norm x^2-4\inner xy+\inner xz .
\]
The two sides of \eqref{eq:G} therefore coincide, which proves the identity.

\paragraph{Proof of \Cref{thm:main}.} The strategy mirrors the continuous proof of \Cref{prop:cont-energy}. It
consists of three steps.
\begin{enumerate}[(i)]
\item \emph{Testing.} We test \eqref{eq:S1} with $\mu^{n+1}$,
\eqref{eq:S2} with $\Dtwo\phi^{n+1}$ and \eqref{eq:S3} with $q^{n+1}$. The
mixed term $\int_\Om \cs\,\grad\mu^{n+1}\cdot\grad(\As q^{n+1})$ then
appears in both the $\phi$- and the $q$-equation. Adding the two results
yields the same doubled cross term as in the continuous proof.
\item \emph{Dissipation.} We apply the pointwise identity \eqref{eq:CS}
to this expression. It turns the sum of the spatial terms into
$-\Dmod^{n+1}$, which is non-negative.
\item \emph{Telescoping.} We apply the algebraic G-stability identity
\eqref{eq:G} to each BDF2 term $\inner{\Dtwo a^{n+1}}{a^{n+1}}$ with
$a=\grad\phi,q,r$. Each term becomes a difference of consecutive values of
$\Phimix$, plus a non-negative remainder.
\end{enumerate} Collecting the three resulting telescoping
differences reproduces $\Emod^{n+1}-\Emod^n$, giving exactly \eqref{eq:star}.

\emph{Step 1: Eliminate $\mu^{n+1}$ between \eqref{eq:S1} and
\eqref{eq:S2}.} Take the inner product of \eqref{eq:S2}
($\mu^{n+1}=-\gamma\lap\phi^{n+1}+r^{n+1}\Gs$) with $\Dtwo\phi^{n+1}$, i.e.\
multiply both sides of \eqref{eq:S2} by $\Dtwo\phi^{n+1}$ and
integrate over $\Om$:
\[
\inner{\mu^{n+1}}{\Dtwo\phi^{n+1}}
=-\gamma\inner{\lap\phi^{n+1}}{\Dtwo\phi^{n+1}}+r^{n+1}\inner{\Gs}{\Dtwo\phi^{n+1}} .
\]
By \eqref{eq:ibp2}, we have $-\gamma\inner{\lap\phi^{n+1}}{\Dtwo\phi^{n+1}}
=\gamma\inner{\grad\phi^{n+1}}{\grad\Dtwo\phi^{n+1}}$ and since $\Dtwo$ is
only a fixed linear combination of a field's values at three time levels
(it never touches the spatial variable), it commutes with the spatial
gradient, $\grad\Dtwo\phi^{n+1}=\Dtwo\grad\phi^{n+1}$. This is the
commutation hypothesis of \Cref{thm:main}. So
\[
\inner{\mu^{n+1}}{\Dtwo\phi^{n+1}}
=\gamma\inner{\grad\phi^{n+1}}{\Dtwo\grad\phi^{n+1}}+r^{n+1}\inner{\Gs}{\Dtwo\phi^{n+1}} .
\]
Equation \eqref{eq:S4} ($\Dtwo r^{n+1}=\half\inner{\Gs}{\Dtwo\phi^{n+1}}$),
multiplied by $2$, says $\inner{\Gs}{\Dtwo\phi^{n+1}}=2\Dtwo r^{n+1}$, so
\begin{equation}\label{eq:app-a}
\inner{\mu^{n+1}}{\Dtwo\phi^{n+1}}
= \gamma\inner{\grad\phi^{n+1}}{\Dtwo\grad\phi^{n+1}}+2r^{n+1}\Dtwo r^{n+1} .
\end{equation}
Separately, take the inner product of \eqref{eq:S1} with $\mu^{n+1}$, i.e.\
test \eqref{eq:S1} with $g=\mu^{n+1}$, and use \eqref{eq:ibp1} to get
\begin{equation}\label{eq:app-b}
\inner{\Dtwo\phi^{n+1}}{\mu^{n+1}}
=-\int_\Om\bs|\grad\mu^{n+1}|^2+\int_\Om\cs\grad(\As q^{n+1})\cdot\grad\mu^{n+1} .
\end{equation}
The left-hand sides of \eqref{eq:app-a} and \eqref{eq:app-b} are
equal by the symmetry of the inner product. So, equating the right-hand sides of \eqref{eq:app-a} and \eqref{eq:app-b} yields
\begin{equation}\label{eq:app-I}
\gamma\inner{\grad\phi^{n+1}}{\Dtwo\grad\phi^{n+1}}+2r^{n+1}\Dtwo r^{n+1}
=-\int_\Om\bs|\grad\mu^{n+1}|^2+\int_\Om\cs\grad(\As q^{n+1})\cdot\grad\mu^{n+1} .
\end{equation}
This single equation has eliminated $\mu^{n+1}$ from appearing on its own and it enters only through $\grad\mu^{n+1}$ on the right, 
exactly as in the continuous proof above.

\emph{Step 2: Bring in the $q$-equation.} Take the inner product of
\eqref{eq:S3} with $q^{n+1}$ (test \eqref{eq:S3} with $g=q^{n+1}$). Write
$F\eqdef\cs\grad\mu^{n+1}-d_0\grad(\As q^{n+1})$ for the flux inside the
divergence in \eqref{eq:S3}, so that \eqref{eq:S3} reads
$\Dtwo q^{n+1}=-\ks q^{n+1}-\As\dv F+\epsq\lap q^{n+1}$. Distributing the
inner product over the three terms on the right (bilinearity in the
second slot),
\[
\inner{\Dtwo q^{n+1}}{q^{n+1}}
=-\inner{\ks q^{n+1}}{q^{n+1}}-\inner{\As\dv F}{q^{n+1}}
+\epsq\inner{\lap q^{n+1}}{q^{n+1}} .
\]
Again, we consider the three terms on the right-hand side separately:
\begin{itemize}
\item The first is already a plain integral:
$-\inner{\ks q^{n+1}}{q^{n+1}}=-\int_\Om\ks(q^{n+1})^2$.
\item The third, by \eqref{eq:ibp2} with $u=g=q^{n+1}$:
$\inner{\lap q^{n+1}}{q^{n+1}}=-\inner{\grad q^{n+1}}{\grad q^{n+1}}
=-\norm{\grad q^{n+1}}^2$, so
$\epsq\inner{\lap q^{n+1}}{q^{n+1}}=-\epsq\norm{\grad q^{n+1}}^2$.
\item For the second term, we use the same regrouping as in the proof of
\Cref{prop:cont-energy}. Since $\As$ is a known function of $x$ (it is
frozen), $\As q^{n+1}$ is an admissible test function. Indeed,
\[
\inner{\As\dv F}{q^{n+1}}=\int_\Om\As(\dv F)\,q^{n+1}\dx
=\int_\Om(\dv F)\,(\As q^{n+1})\dx=\inner{\dv F}{\As q^{n+1}} .
\]
Applying \eqref{eq:ibp1} with $g=\As q^{n+1}$ gives
\[
\inner{\As\dv F}{q^{n+1}}=-\inner{F}{\grad(\As q^{n+1})} .
\]
With $F=\cs\grad\mu^{n+1}-d_0\grad(\As q^{n+1})$, this yields
\begin{align*}
-\inner{\As\dv F}{q^{n+1}}&=\inner{F}{\grad(\As q^{n+1})}\\
&=\int_\Om\cs\grad\mu^{n+1}\cdot\grad(\As q^{n+1})
 -d_0\int_\Om|\grad(\As q^{n+1})|^2 .
\end{align*}

\end{itemize}
Collecting the three pieces,
\begin{equation}\label{eq:app-II}
\inner{\Dtwo q^{n+1}}{q^{n+1}}
=-\int_\Om\ks(q^{n+1})^2+\int_\Om\cs\grad(\As q^{n+1})\cdot\grad\mu^{n+1}
-d_0\int_\Om|\grad(\As q^{n+1})|^2-\epsq\norm{\grad q^{n+1}}^2 .
\end{equation}
The term $\int_\Om\cs\grad(\As q^{n+1})\cdot\grad\mu^{n+1}$ appears in
both \eqref{eq:app-I} and \eqref{eq:app-II}. It is the discrete
counterpart of the cross term that appears twice in the proof of
\Cref{prop:cont-energy}. The reason is the same: \eqref{eq:S1} and
\eqref{eq:S3} contain the same flux $\cs\grad\mu^{n+1}-\ldots$ with the
same relative sign, which is fixed by the sign convention in
\eqref{eq:chq-phi}. As in Step 5 of the proof of \Cref{prop:cont-energy},
we write \eqref{eq:app-I} and \eqref{eq:app-II} one under the other and
add them:
\begin{align*}
\gamma\inner{\grad\phi^{n+1}}{\Dtwo\grad\phi^{n+1}}+2r^{n+1}\Dtwo r^{n+1}
&=-\int_\Om\bs|\grad\mu^{n+1}|^2+\int_\Om\cs\grad(\As q^{n+1})\cdot\grad\mu^{n+1} ,\\
\inner{\Dtwo q^{n+1}}{q^{n+1}}
&=\int_\Om\cs\grad(\As q^{n+1})\cdot\grad\mu^{n+1}
-d_0\int_\Om|\grad(\As q^{n+1})|^2\\
&\quad-\int_\Om\ks(q^{n+1})^2-\epsq\norm{\grad q^{n+1}}^2 ,
\end{align*}
the shared term occurs once in each line, so adding the two equations does
not cancel it but doubles it:
\begin{multline}\label{eq:app-III}
\gamma\inner{\grad\phi^{n+1}}{\Dtwo\grad\phi^{n+1}}+2r^{n+1}\Dtwo r^{n+1}
+\inner{\Dtwo q^{n+1}}{q^{n+1}}\\
=-\int_\Om\bs|\grad\mu^{n+1}|^2+2\int_\Om\cs\grad(\As q^{n+1})\cdot\grad\mu^{n+1}\\
-d_0\int_\Om|\grad(\As q^{n+1})|^2-\int_\Om\ks(q^{n+1})^2-\epsq\norm{\grad q^{n+1}}^2 .
\end{multline}

\emph{Step 3: Complete the square.} The first three terms on the right of
\eqref{eq:app-III} are exactly $-\bs|u|^2+2\cs(u\cdot w)-d_0|w|^2$ with
$u=\grad\mu^{n+1}$, $w=\grad(\As q^{n+1})$, pointwise. By the identity
\eqref{eq:CS} (derived explicitly above, in the proof of
\Cref{prop:cont-energy}) with $b=\bs,c=\cs$,
\[
\bs|u|^2-2\cs(u\cdot w)+d_0|w|^2=\Bigl(\bs-\tfrac{\cs^2}{d_0}\Bigr)|u|^2
+\tfrac1{d_0}|\cs u-d_0w|^2 ,
\]
so negating both sides converts the first three terms of
\eqref{eq:app-III}, pointwise at every $x\in\Om$, into
$-\bigl(\bs-\tfrac{\cs^2}{d_0}\bigr)|u|^2-\tfrac1{d_0}|\cs u-d_0w|^2$.
Substituting this into \eqref{eq:app-III} and integrating over $\Om$,
\begin{multline*}
\gamma\inner{\grad\phi^{n+1}}{\Dtwo\grad\phi^{n+1}}+2r^{n+1}\Dtwo r^{n+1}
+\inner{\Dtwo q^{n+1}}{q^{n+1}}\\
=-\int_\Om\Bigl[\Bigl(\bs-\tfrac{\cs^2}{d_0}\Bigr)|\grad\mu^{n+1}|^2
+\tfrac1{d_0}|\cs\grad\mu^{n+1}-d_0\grad(\As q^{n+1})|^2\Bigr]\dx\\
-\int_\Om\ks(q^{n+1})^2-\epsq\norm{\grad q^{n+1}}^2 .
\end{multline*}
Collecting the four terms on the right-hand side under one integral, with
an overall minus sign, gives exactly $-\Dmod^{n+1}$ as defined in
\eqref{eq:Dmod}. The four integrands
$\bigl(\bs-\tfrac{\cs^2}{d_0}\bigr)|\grad\mu^{n+1}|^2$,
$\tfrac1{d_0}|\cs\grad\mu^{n+1}-d_0\grad(\As q^{n+1})|^2$,
$\ks(q^{n+1})^2$ and $\epsq|\grad q^{n+1}|^2$
are precisely the four summands in the definition of $\Dmod^{n+1}$.
Hence
\begin{equation}\label{eq:app-IV}
\gamma\inner{\grad\phi^{n+1}}{\Dtwo\grad\phi^{n+1}}+2r^{n+1}\Dtwo r^{n+1}
+\inner{\Dtwo q^{n+1}}{q^{n+1}} = -\Dmod^{n+1} .
\end{equation}
This identity has no remainder depending on $\phi^{n+1}-\phi^\star$. A
naive linearisation would freeze only some of the coefficients and
evaluate the others at $\phi^{n+1}$. Here, every coefficient on the
right-hand side of \eqref{eq:app-III} is frozen (it carries the subscript
$\str$ and is independent of $\phi^{n+1}$ and $q^{n+1}$), so \eqref{eq:CS}
applies exactly.

\emph{Step 4: Telescoping the left-hand side.} Each of the three terms on
the left of \eqref{eq:app-IV} has the form
$\inner{\Dtwo a^{n+1}}{a^{n+1}}$. For $a=\grad\phi$ and $a=q$ this holds
directly. For $a=r$ it holds because $r$ is a scalar, so that
$2r^{n+1}\Dtwo r^{n+1}=2\inner{\Dtwo r^{n+1}}{r^{n+1}}$. By the definition
\eqref{eq:bdf2-ops}, $2\dt\,\Dtwo a^{n+1}=3a^{n+1}-4a^n+a^{n-1}$. Taking
the inner product with $a^{n+1}$ and applying the G-stability identity
\eqref{eq:G} gives
\begin{equation}\label{eq:app-Gapp}
2\dt\inner{\Dtwo a^{n+1}}{a^{n+1}}=\inner{3a^{n+1}-4a^n+a^{n-1}}{a^{n+1}}
=\half\bigl[\Phimix(a^{n+1},a^n)-\Phimix(a^n,a^{n-1})\bigr]+\half\norm{\dd a^{n+1}}^2 .
\end{equation}
Applying \eqref{eq:app-Gapp} with $a=\grad\phi$ and multiply the whole line
by $\gamma$ to get
\begin{equation}\label{eq:app-Gapp1}
2\dt\,\gamma\inner{\grad\phi^{n+1}}{\Dtwo\grad\phi^{n+1}}
=\tfrac\gamma2\bigl[\Phimix(\grad\phi^{n+1},\grad\phi^n)-\Phimix(\grad\phi^n,\grad\phi^{n-1})\bigr]
+\tfrac\gamma2\norm{\grad\dd\phi^{n+1}}^2 .
\end{equation}
Applying \eqref{eq:app-Gapp} with $a=q$, we get
\begin{equation}\label{eq:app-Gapp2}
2\dt\inner{q^{n+1}}{\Dtwo q^{n+1}}
=\half\bigl[\Phimix(q^{n+1},q^n)-\Phimix(q^n,q^{n-1})\bigr]+\half\norm{\dd q^{n+1}}^2 .
\end{equation}
Applying \eqref{eq:app-Gapp} with $a=r$ and then multiply the  by $2$
(matching the factor of $2$ already present in $2r^{n+1}\Dtwo r^{n+1}$) gives
\begin{equation}\label{eq:app-Gapp3}
2\dt\cdot2r^{n+1}\Dtwo r^{n+1}
=\bigl[\Phimix(r^{n+1},r^n)-\Phimix(r^n,r^{n-1})\bigr]+\bigl(\dd r^{n+1}\bigr)^2 .
\end{equation}
Adding \eqref{eq:app-Gapp1},\eqref{eq:app-Gapp2} and \eqref{eq:app-Gapp3}, the left-hand side is exactly $2\dt$
times the left-hand side of \eqref{eq:app-IV}, which by \eqref{eq:app-IV}
equals $2\dt\,(-\Dmod^{n+1})$, so
\begin{multline*}
2\dt\,\bigl(-\Dmod^{n+1}\bigr)
=\half\Bigl\{\gamma\bigl[\Phimix(\grad\phi^{n+1},\grad\phi^n)-\Phimix(\grad\phi^n,\grad\phi^{n-1})\bigr]
+\bigl[\Phimix(q^{n+1},q^n)-\Phimix(q^n,q^{n-1})\bigr]\\
+2\bigl[\Phimix(r^{n+1},r^n)-\Phimix(r^n,r^{n-1})\bigr]\Bigr\}
+\tfrac\gamma2\norm{\grad\dd\phi^{n+1}}^2+\half\norm{\dd q^{n+1}}^2+\bigl(\dd r^{n+1}\bigr)^2 .
\end{multline*}
The curly-braced quantity is exactly $4(\Emod^{n+1}-\Emod^n)$ by the
definition \eqref{eq:Emod} of $\Emod^n$,
\begin{multline*}
\Emod^{n+1}-\Emod^n=\tfrac14\Bigl\{\gamma\bigl[\Phimix(\grad\phi^{n+1},\grad\phi^n)-\Phimix(\grad\phi^n,\grad\phi^{n-1})\bigr]\\
+\bigl[\Phimix(q^{n+1},q^n)-\Phimix(q^n,q^{n-1})\bigr]
+2\bigl[\Phimix(r^{n+1},r^n)-\Phimix(r^n,r^{n-1})\bigr]\Bigr\} ,
\end{multline*}
the constant $-\CO$ appearing identically inside $\Emod^{n+1}$ and
$\Emod^n$ having cancelled upon subtraction and
\begin{multline*}
2\dt\,\bigl(-\Dmod^{n+1}\bigr)
=\half\cdot4\bigl(\Emod^{n+1}-\Emod^n\bigr)+\tfrac\gamma2\norm{\grad\dd\phi^{n+1}}^2
+\half\norm{\dd q^{n+1}}^2+\bigl(\dd r^{n+1}\bigr)^2\\
=2\bigl(\Emod^{n+1}-\Emod^n\bigr)+\tfrac\gamma2\norm{\grad\dd\phi^{n+1}}^2
+\half\norm{\dd q^{n+1}}^2+\bigl(\dd r^{n+1}\bigr)^2 .
\end{multline*}
Dividing this last equation through by $2$ and rearranging gives
\[
\bigl(\Emod^{n+1}-\Emod^n\bigr)
+\tfrac14\bigl[\gamma\norm{\grad\dd\phi^{n+1}}^2+\norm{\dd q^{n+1}}^2
+2\bigl(\dd r^{n+1}\bigr)^2\bigr] = -\dt\,\Dmod^{n+1} ,
\]
which, after moving $-\dt\Dmod^{n+1}$ to the left, is exactly \eqref{eq:star}.

\emph{Step 5: Non-negativity.} Each term in the definition
\eqref{eq:Dmod} of $\Dmod^{n+1}$ is the integral of a pointwise
non-negative function:
\begin{itemize}
\item $\bigl(\bs-\tfrac{\cs^2}{d_0}\bigr)|\grad\mu^{n+1}|^2\ge0$ by
\Cref{ass:coerc};
\item $\tfrac1{d_0}|\cs\grad\mu^{n+1}-d_0\grad(\As q^{n+1})|^2\ge0$, since
it is a square and $d_0>0$;
\item $\ks(q^{n+1})^2\ge0$, since $\ks\ge0$;
\item $\epsq|\grad q^{n+1}|^2\ge0$, since $\epsq\ge0$.
\end{itemize}
Hence $\Dmod^{n+1}\ge0$, which completes the proof.

\paragraph{Proof of the filtered-scheme stability extension (\Cref{rmk:filter}).}
When coefficients with strong curvature, such as $\As$ (which behaves like
$\cot(\pi\phi)$), are evaluated pointwise on a spectral grid, products of
fields can generate spurious high-frequency (aliasing) noise that has no
counterpart in the continuous solution. For the long-time runs of
\Cref{sec:physics}, we suppress this noise by applying the $2/3$-rule
filter $F$ after every step. The filter discards the highest third of the
resolved wavenumbers in each direction.

Filtering changes the fields that are stored and used in the next step.
Therefore, \Cref{thm:main}, which was proved for the unfiltered scheme,
does not apply directly to the scheme that is actually run. We show here
that the identity \eqref{eq:star} extends to the filtered scheme, with two
additional non-negative terms. Consequently, the scheme that produced the
results of \Cref{sec:physics} is provably stable.

\medskip
\noindent\emph{Properties of the filter.}
In Fourier space, $F$ multiplies each mode by $1$ (kept) or $0$
(discarded), depending only on the wavenumber. Hence $F$ is diagonal in
the Fourier basis, which gives three properties:
\begin{itemize}
\item $F^2=F$, since multiplying a mode by $0$ or $1$ twice is the same
as doing so once;
\item $F^\ast=F$, since $F$ is a real diagonal operator in an orthonormal
basis;
\item $\grad F=F\grad$, since $\grad$ is also diagonal in the Fourier
basis, and diagonal operators commute.
\end{itemize}
It follows that $F(I-F)=F-F^2=0$, and therefore
$\inner{Fu}{(I-F)u}=\inner{u}{F(I-F)u}=0$ for every field $u$. In
particular,
\begin{equation}\label{eq:F-orth}
\norm u^2=\norm{Fu}^2+\norm{(I-F)u}^2 .
\end{equation}

\medskip
\noindent\emph{Effect of the filter on $\Phimix$.}
Let $v$ be a field with $Fv=v$, and let $u$ be arbitrary. We claim that
\begin{equation}\label{eq:Phi-filter}
\Phimix(u,v)-\Phimix(Fu,v)=5\norm{(I-F)u}^2 .
\end{equation}
The first term of $\Phimix$ is handled by \eqref{eq:F-orth}:
$\norm u^2-\norm{Fu}^2=\norm{(I-F)u}^2$. For the second term, write
$2u-v=(2Fu-v)+2(I-F)u$. Since $Fv=v$, the field $2Fu-v$ lies in the range
of $F$ and is therefore orthogonal to $(I-F)u$. Hence
\[
\norm{2u-v}^2=\norm{2Fu-v}^2+4\norm{(I-F)u}^2 .
\]
Adding the two differences gives
$\norm{(I-F)u}^2+4\norm{(I-F)u}^2=5\norm{(I-F)u}^2$, which proves
\eqref{eq:Phi-filter}. Thus, filtering can only decrease $\Phimix$, and
the decrease is exactly five times the squared norm of the removed part.

\medskip
\noindent\emph{Application to the scheme.}
At each step, the stored previous levels are outputs of the filter, so
the hypothesis $Fv=v$ holds for them (by induction over the steps, with
filtered initial data). We apply \eqref{eq:Phi-filter} with
\begin{itemize}
\item $u=\grad\phi^{n+1}$ and $v=\grad\phi^n$, where $\phi^n$ is the
stored, already filtered field (note that $\grad F=F\grad$);
\item $u=q^{n+1}$ and $v=q^n$;
\item $u=r^{n+1}$ and $v=r^n$. Here no filtering is applied to the scalar
$r$, so this pair contributes nothing.
\end{itemize}
Replacing the raw fields $\phi^{n+1},q^{n+1}$ by their filtered versions
$F\phi^{n+1},Fq^{n+1}$ in $\Emod^{n+1}$ lowers it by
\[
\tfrac{5\gamma}{4}\norm{(I-F)\grad\phi^{n+1}}^2
+\tfrac54\norm{(I-F)q^{n+1}}^2\ \ge\ 0 .
\]
Therefore \eqref{eq:star} holds for the filtered scheme with these two
non-negative terms added on the dissipative side. In particular,
\Cref{cor:stab} remains valid: the modified energy of the stored
(filtered) fields does not increase, for every $\dt>0$.

\paragraph{Proof of \Cref{thm:solv}.}
After spatial discretisation, \eqref{eq:S1}--\eqref{eq:S4} form a square
linear system for the unknowns $(\phi^{n+1},\mu^{n+1},q^{n+1},r^{n+1})$,
whose right-hand side depends on the data from the two previous time
levels. A square linear system $A\mathbf x=\mathbf b$ has exactly one
solution for every $\mathbf b$ if and only if the homogeneous system
$A\mathbf x=\mathbf 0$ has only the trivial solution $\mathbf x=\mathbf 0$,
that is, if the kernel of $A$ is trivial. The matrix $A$ is built from the
frozen coefficients $\bs,\cs,\As,\ks,\Gs$ and does not depend on the
previous-level data. It therefore suffices to consider the homogeneous
problem: we show that the scheme with zero previous-level data,
$\phi^n=\phi^{n-1}=q^n=q^{n-1}=r^n=r^{n-1}=0$, has only the trivial
solution. This is a property of the linear map defining the scheme, and
it implies unique solvability for arbitrary (non-zero) data in an actual
simulation.

To distinguish this homogeneous problem from an actual trajectory, we
denote its solution by $(\psi,\nu,\chi,\rho)$ instead of
$(\phi^{n+1},\mu^{n+1},q^{n+1},r^{n+1})$. For zero previous-level data,
the BDF2 operators \eqref{eq:bdf2-ops} reduce to
$\Dtwo\psi=3\psi/(2\dt)$ and $\dd\psi=\psi$, and similarly for $\chi$ and
$\rho$.

\medskip
\noindent\emph{Step 1: $\grad\psi=0$, $\chi=0$, $\rho=0$ and $\Dmod=0$.}
For zero previous-level data, $\Emod^n=-\CO$, and, using
$\Phimix(u,0)=5\norm u^2$,
\[
\Emod^{n+1}+\CO=\tfrac54\bigl[\gamma\norm{\grad\psi}^2+\norm\chi^2+2\rho^2\bigr].
\]
The identity \eqref{eq:star} holds for arbitrary previous-level data, so
it holds here as well. Inserting the expressions above (the remainder
term in \eqref{eq:star} contributes
$\tfrac14[\gamma\norm{\grad\psi}^2+\norm\chi^2+2\rho^2]$) gives
\[
\tfrac32\bigl[\gamma\norm{\grad\psi}^2+\norm\chi^2+2\rho^2\bigr]
+\dt\,\Dmod=0 ,
\]
where $\Dmod$ is \eqref{eq:Dmod} evaluated at $(\nu,\chi)$. Since
$\gamma>0$, and $\Dmod\ge0$ by \Cref{ass:coerc} (as in Step 5 of the
proof of \Cref{thm:main}), all terms are non-negative. Hence each of them
vanishes:
\[
\grad\psi=0,\qquad \chi=0,\qquad \rho=0,\qquad \Dmod=0 .
\]

\medskip
\noindent\emph{Step 2: $\psi=0$.}
Since $\grad\psi=0$, the function $\psi$ is constant in space. To
determine the constant, we test \eqref{eq:S1} with $1$. By
\eqref{eq:ibp1} with $g=1$, the right-hand side integrates to zero, so
$\int_\Om\Dtwo\psi=\tfrac{3}{2\dt}\int_\Om\psi=0$. Thus $\psi$ has zero
mean. A constant function with zero mean on $\Om$ vanishes, so $\psi=0$.

\medskip
\noindent\emph{Step 3: $\nu=0$.}
With $\chi=0$, the integrand of $\Dmod$ simplifies, because
$\bigl(\bs-\tfrac{\cs^2}{d_0}\bigr)+\tfrac{\cs^2}{d_0}=\bs$:
\[
\Dmod=\int_\Om\bigl(\bs-\tfrac{\cs^2}{d_0}\bigr)|\grad\nu|^2
+\tfrac1{d_0}\int_\Om|\cs\grad\nu|^2=\int_\Om\bs|\grad\nu|^2 .
\]
Since $\Dmod=0$ and $\bs\ge b_1>0$, we obtain $\grad\nu=0$, so $\nu$ is
constant in space. (This is the only place where the additional
assumption $\bs\ge b_1>0$ is used.) The constant is fixed by
\eqref{eq:S2}: with $\psi=0$ and $\rho=0$, it reads
$\nu=-\gamma\lap\psi+\rho\,\Gs=0$.

\medskip
\noindent We have shown $\psi=\nu=\chi=\rho=0$. Thus the homogeneous system
has only the trivial solution, and \eqref{eq:S1}--\eqref{eq:S4} has a
unique solution for every $\dt>0$ and for arbitrary data
$(\phi^n,q^n,r^n)$ and $(\phi^{n-1},q^{n-1},r^{n-1})$.

\section{The manufactured solution used in \Cref{sec:temporal}}\label{app:mms}

The temporal MMS check of \Cref{sec:temporal} uses the explicit, smooth
functions
\[
\phi_{\mathrm{ex}}(x,y,t) \eqdef 0.5 + 0.15\cos(2\pi x)\cos(2\pi y)\,e^{-3t},
\qquad
q_{\mathrm{ex}}(x,y,t) \eqdef 0.10\sin(2\pi x)\sin(2\pi y)\,e^{-2t},
\]
on $\Om=(0,1)^2$ at $\gamma=0.30$, chosen large enough that $\phi=0.5$ is
not spinodally unstable for this low wavenumber, so the forced problem
below is not numerically stiff. Neither function solves \eqref{eq:chq}.
With $\mu_{\mathrm{ex}}\eqdef-\gamma\lap\phi_{\mathrm{ex}}+f'(\phi_{\mathrm{ex}})$,
define the forcing pair
\begin{align*}
S_\phi(t) &\eqdef \pt\phi_{\mathrm{ex}}(t) - \dv\!\bigl(b(\phi_{\mathrm{ex}})\grad\mu_{\mathrm{ex}}
   -c(\phi_{\mathrm{ex}})\grad(A(\phi_{\mathrm{ex}})q_{\mathrm{ex}})\bigr), \\
S_q(t) &\eqdef \pt q_{\mathrm{ex}}(t) - \Bigl[-\kappa(\phi_{\mathrm{ex}})q_{\mathrm{ex}}
   -A(\phi_{\mathrm{ex}})\dv\!\bigl(c(\phi_{\mathrm{ex}})\grad\mu_{\mathrm{ex}}-d_0\grad(A(\phi_{\mathrm{ex}})q_{\mathrm{ex}})\bigr)
   +\epsq\lap q_{\mathrm{ex}}\Bigr],
\end{align*}
where $\pt\phi_{\mathrm{ex}},\pt q_{\mathrm{ex}}$ are the known, closed-form time
derivatives of the two functions above. By construction, $S_\phi,S_q$ are
exactly the residuals of \eqref{eq:chq-phi},\eqref{eq:chq-q} at
$(\phi_{\mathrm{ex}},q_{\mathrm{ex}})$. Adding $S_\phi,S_q$ to the right-hand sides
of \eqref{eq:chq-phi},\eqref{eq:chq-q} makes $(\phi_{\mathrm{ex}},q_{\mathrm{ex}})$ an
exact solution of the resulting, forced system. At the discrete level,
$S_\phi,S_q$ are evaluated at $t_{n+1}$ (using the same discrete spatial
operators as the scheme itself) and added to the right-hand sides of
\eqref{eq:S1},\eqref{eq:S3} only; \eqref{eq:S4} is left untouched, since
its correctness follows once $\phi^{n+1}$ already matches $\phi_{\mathrm{ex}}$.
The resulting forced scheme has a known exact solution at every $\dt$, so
the error $\|\phi^n-\phi_{\mathrm{ex}}(t_n)\|_\infty$ can be measured directly.
Because $(\phi_{\mathrm{ex}},q_{\mathrm{ex}})$ does not solve the unforced system,
this forced run is not expected to, and does not, satisfy the energy
identity \eqref{eq:star}. The check is purely about the truncation-error
order, not about stability.

\end{document}